\documentclass{article}
\usepackage{xcolor}
\definecolor{grey}{rgb}{0.5,0.5,0.5}
\usepackage{amsmath}
\usepackage[left=2.6cm, right=2.6cm, bottom=2.6cm, top=2.6cm]{geometry}
\usepackage{hyperref}
\usepackage{cleveref}
\usepackage[dvipsnames]{xcolor}
\usepackage{cancel,xcolor}
\makeatletter
\newcommand{\cancelslash}[2]{%
  \tikz[baseline=(X.base)]{
    \node[inner sep=0pt] (X) {$#2$};
    \draw[line width=1pt, color=#1] (X.south west) -- (X.north east);
  }%
}

\usepackage{dsfont}
\usepackage{amssymb}
\usepackage{mathtools}
\usepackage{tikz-cd}
\usepackage{amsthm}
\usepackage{graphicx}
\usepackage{graphics}
\usepackage{mathrsfs}
\usepackage{cleveref}
\usepackage{thmtools}
\usepackage{enumitem}
\usepackage{bbm}
\usepackage{dsfont}
\usepackage[toc,page]{appendix}

\newlist{steps}{enumerate}{1}
\usepackage{mathtools}
\makeatletter
\newcommand{\xMapsto}[2][]{\ext@arrow 0599{\Mapstofill@}{#1}{#2}}
\def\Mapstofill@{\arrowfill@{\Mapstochar\Relbar}\Relbar\Rightarrow}
\makeatother
\usepackage{array}
\setlist[steps, 1]{label = Step \arabic*:}
\theoremstyle{plain}
\usepackage{footnote}
\makesavenoteenv{tabular}
\newtheorem*{theorem*}{Theorem}
\newtheorem{theorem}{Theorem}[subsection]
\newtheorem{definition}[theorem]{Definition}

\newtheorem{proposition}[theorem]{Proposition}
\newtheorem{remark}[theorem]{Remark}
\newtheorem{corollary}[theorem]{Corollary}
\newtheorem*{corollary*}{Corollary}
\newtheorem*{proposition*}{Proposition}
\newtheorem{definition*}{Definition}
\newtheorem{exmp}[theorem]{Example}

\newtheorem{lemma}[theorem]{Lemma}
\newtheorem*{lemma*}{Lemma}

\numberwithin{equation}{subsection}
\usepackage{rotating}
\usepackage{comment}
\usepackage[dvipsnames]{xcolor}
\definecolor{darkred}{RGB}{139,0,0}
\definecolor{darkgreen}{RGB}{0,100,0}
\definecolor{darkblue}{RGB}{0,0,139}
\definecolor{Slash1}{RGB}{230,25,75}    
\definecolor{Slash2}{RGB}{60,180,75}    
\definecolor{Slash3}{RGB}{0,130,200}    
\definecolor{Slash4}{RGB}{130,200,200}   
\definecolor{Slash5}{RGB}{0,0,250}   
\definecolor{Slash6}{RGB}{70,240,240}   
\definecolor{Slash7}{RGB}{155,125,25}   
\definecolor{Slash8}{RGB}{210,245,60}   
\definecolor{Slash9}{RGB}{240,180,250}   
\definecolor{Slash10}{RGB}{255,180,195} 
\usepackage{mathabx,epsfig}
\def\acts{\mathrel{\reflectbox{$\righttoleftarrow$}}}
\usepackage[nottoc,notlot,notlof]{tocbibind} 

\usepackage[dvipsnames]{xcolor}
\usepackage{cancel}

\definecolor{darkred}{RGB}{40,0,0}
\definecolor{darkgreen}{RGB}{0,80,0}
\definecolor{darkblue}{RGB}{0,0,110}

\title{Isometric Embeddings and Hyperk\"{a}hler Geometry of $\textup{T}^*\mathbb{CP}^{n-1}$ via the Scheme of Rank-1 Projections}
\author{Joshua Lackman}
\date{}
\begin{document}

\maketitle
\begin{abstract}
\noindent 
We show that the hyperk\"{a}hler geometry of $\textup{T}^*\mathbb{CP}^{n-1}$ can be described algebraically by the affine scheme of rank–1 projections, and that this description simultaneously yields explicit $SU(n)$–equivariant isometric embeddings
\[
\textup{T}^*\mathbb{CP}^{n-1}\xhookrightarrow{}\mathbb{R}^{(n^2+1)^2}\,,
\]
as well as a generalization of the hyperk\"{a}hler geometry of $\textup{T}^*\mathbb{CP}^{n-1}$ to arbitrary commutative rings with involutions (and some noncommutative ones). In particular, we obtain para–hyperk\"{a}hler and complex hyperk\"{a}hler manifolds by taking the rings to be the split complex numbers and bicomplex numbers, respectively. The functor of points of the scheme of rank–1 projections is the functor that maps a commutative ring $\mathcal{R}$ to the space of idempotents in $M_n(\mathcal{R})$ whose images are rank–1 projective modules. In particular, its space of $\mathbb{C}$–points is identified with $\textup{T}^*\mathbb{CP}^{n-1}.$
\end{abstract}
\tableofcontents
\section{Introduction}
We show that the hyperk\"{a}hler geometry of $\textup{T}^*\mathbb{CP}^{n-1}$ (\ref{cal}, \ref{egu}) can be described algebraically by the affine scheme of rank–1 projections,\footnote{The earliest reference that the author could find for some identification $\textup{T}\mathbb{CP}^{n-1}\cong\{\textup{projections on } \mathbb{C}^n\}$ is from 2021 (\ref{leu}).} and that this algebraic description simultaneously yields:
\begin{enumerate}
    \item explicit $SU(n)\times\mathbb{Z}/2\mathbb{Z}$–equivariant isometric embeddings
$\textup{T}^*\mathbb{CP}^{n-1}\xhookrightarrow{}\mathbb{R}^{(n^2+1)^2}\,,$\footnote{The only other Euclidean embedding of $\textup{T}^*\mathbb{CP}^{n}$ in the literature that the author could find is for $n=1$ (\ref{han}). In the pseudo–Riemannian context, one can be found in (\ref{dun}), also for $n=1.$}
\item a generalization of the hyperk\"{a}hler geometry of $\textup{T}^*\mathbb{CP}^{n-1},$ where $\mathbb{C}$ is replaced by an arbitrary commutative ring with involution (some noncommutative rings work too).
\end{enumerate}
Regarding item 2, for a commutative ring with involution $(\mathcal{R},*),$ the quaternions are replaced by 
\begin{equation}
    \mathcal{R}[x,*]/(x^2+1)\,,
\end{equation}
where $\mathcal{R}[x,*]$ is the skew–polynomial ring, ie. $xr=r^*x$ for all $r\in\mathcal{R}.$ For example, let $\mathcal{R}$ be the algebraic complex numbers $\overline{Q}.$ Then each Zariski tangent space is endowed with the structure of a module with respect to the quaternion algebra over $\overline{Q}\cap\mathbb{R},$ and these modules are compatible with a metric valued in $\overline{Q}\cap \mathbb{R}.$ 
\\\\In particular, specializing $(\mathcal{R},*)$ to the bicomplex numbers, ie. 
\begin{equation}
   \mathbb{C}[x]/(x^2+1)\,,\;\; (a+xb)^*=\bar{a}+x\bar{b}\,, 
\end{equation}
we obtain a complex hyperk\"{a}hler manifold that complexifies $\textup{T}^*\mathbb{CP}^{n-1}.$ A complex hyperk\"{a}hler manifold is analogous to a hyperk\"{a}hler manifold, with the quaternions replaced by its complexification, ie. the biquaternions. These come with a pseudo–Riemannian metric and every point in 
\begin{equation}
    \{(x,y,z)\in\mathbb{C}^3:x^2+y^2+z^2=1\}
\end{equation}
determines a compatible integrable almost complex structure. If instead we use the split complex numbers, ie.
\begin{equation}
    \mathbb{R}[x]/(x^2-1)\,,\;\;(a+xb)^*=a-xb\,,
\end{equation}
we obtain a para–hyperk\"{a}her manifold.
\\\\Para–hyperk\"{a}hler manifolds and complex hyperk\"{a}hler manifolds carry many symplectic forms together with transverse pairs of real and complex polarizations, making them natural candidates for quantization. Indeed, a transverse pair of polarizations determines a symplectic connection and hence, via Fedosov quantization (\ref{fed}), a formal deformation quantization.
\\\\Complex hyperk\"{a}hler manifolds are particularly interesting due to an analogue of Berger's theorem: the main result of \ref{bou} states that the semisimple part of the algebra of parallel endomorphisms of an indecomposable pseudo–Riemannian manifold is a Clifford algebra, and that the maximal Clifford algebra it can be is the biquaternions, which exactly corresponds to a complex hyperk\"{a}hler manifold. However, there is a lack of explicit examples in the literature. Some discussion of para–hyperk\"{a}hler geometry and complex hyperk\"{a}hler geometry can be found in \ref{bou}, \ref{maz}.
\subsection{Results}
In particular: 
\begin{enumerate}
    \item We construct explicit $SU(n)\times\mathbb{Z}/2\mathbb{Z}$–equivariant isometric embeddings 
\begin{equation}\label{isome}
   \textup{T}^*\mathbb{C}\mathbb{P}^{n-1}\xhookrightarrow{}\mathbb{R}^{(n^2+1)^2}
\end{equation}
with respect to Calabi's hyperk\"{a}hler metric\footnote{The action $SU(n)\acts \textup{T}^*\mathbb{CP}^{n-1}$ is by isometries and is induced by the action $SU(n)\acts \mathbb{CP}^{n-1}.$ There is an additional $\mathbb{Z}/2\mathbb{Z}$ isometry.} on $ \textup{T}^*\mathbb{C}\mathbb{P}^{n-1},$ for all $n>1.$ The images of these maps are connected components of real affine varieties. 
\\\\To understand this embedding:
\begin{enumerate}[label=\arabic*.]
    \item Let $\textup{Herm}(n^2)$ denote $n^2\!\times\! n^2$ Hermitian matrices acting on the Hilbert space of $n\!\times\! n$ matrices, $\textup{M}_n(\mathbb{C}),$ with respect to the standard basis.
    \item Let $g_0,g_1,g_2$ denote the standard inner products on $\mathbb{R},\,\textup{M}_n(\mathbb{C}),\,\textup{Herm}(n^2).$
    \item We identify a point in $\textup{T}^*\mathbb{CP}^{n-1}$ with a pair $(V,f),$ where $V\subset \mathbb{C}^n$ is a 1–dimensional subspace and $f\in\textup{hom}(V^\perp,V).$
\end{enumerate}
Consider the following Euclidean space:
\begin{equation}
\big(\mathbb{R}\oplus \textup{M}_n(\mathbb{C})\oplus\textup{Herm}(n^2),\,\frac{5}{2}g_0\oplus g_1\oplus \frac{1}{2}g_2\big)\,.
\end{equation}
Then $\textup{T}^*\mathbb{CP}^{n-1}$ is isometric to the submanifold given by all $(x,q,p)$ satisfying
\begin{equation}\label{semi}
    p(q)=xq,\,p^2=xp,\,xq^2=q,\,x\textup{Tr}\,q=1,\,\textup{Tr}\, p=x,\,\textup{Tr}(q^\dagger q)=x^2,\,x\ge 0\,,
     \end{equation}
and a K\"{a}hler potential is given by
\begin{equation}
    (x,q,p)\mapsto x^2 \,.
\end{equation}
In other words: $xq$ is a rank–1 projection, $p/x$ equals the \textit{Hermitian} projection onto $\mathbb{C}q$ and $x$ is the trace–class norm of $q.$ The map 
\begin{equation}
\mathbb{R}\oplus\textup{M}_n(\mathbb{C})\oplus \textup{Herm}(n^2)\to \textup{M}_n(\mathbb{C})\,,\;\;   (x,q,p)\mapsto xq
\end{equation} restricts to a diffeomorphism from the submanifold \ref{semi} onto rank–1 projections. The embedding \ref{isome} is then obtained by identifying $\textup{T}^*\mathbb{CP}^{n-1}$ with rank–1 projections, which we do as follows: 
\begin{equation}
  \textup{Rank}–1\,\textup {Projections}\to  \textup{T}^*\mathbb{CP}^{n-1}\,,\;\;q\mapsto (\textup{im}(q),\,q\vert_{\textup{im}(q)^\perp})\,.
\end{equation} 
A quantum–mechanically well–motivated K\"{a}hler potential is given by the trace–class norm of the projection, ie. the function
\begin{equation}
    (V,f)\mapsto \sqrt{1+ff^*}\,.
\end{equation}
\item We show that the following space is naturally a complex hyperk\"{a}hler manifold:
\begin{equation}\label{spce}
    \{(q,z)\in\textup{M}_n(\mathbb{C}[x]/(x^2+1))\times\mathbb{R}[x]/(x^2+1):q^2=q,\,\textup{Tr}(q)=1,\,z^2\,\textup{Tr}(q^\dagger q )=1\}\,.
\end{equation}
\end{enumerate}
To achieve this, we study the hyperk\"{a}hler geometry of $\textup{T}^*\mathbb{CP}^{n-1}$ in the context of $^*$–algebras: we show that $\textup{T}^*\mathbb{CP}^{n-1}$ is biholomorphic to the $\mathbb{C}$–points of the affine scheme of rank–1 projections and that any involution $*$ on a commutative ring $\mathcal{R}$ determines an analogue of the hyperk\"{a}hler structure of $\textup{T}^*\mathbb{CP}^{n-1}$ on the $\mathcal{R}$–points of this scheme.
We obtain \ref{spce} by letting $\mathcal{R}$ be the bicomplex numbers, ie. $\mathbb{C}[x]/(x^2+1)$ with involution $(a+xb)^*=\overline{a}+x\overline{b}.$
\newpage\subsection{Overview}
The main idea is the following: the functor of points of the scheme of rank–1 projections is given by
\begin{equation}
\mathds{1}^{n-1}:\textup{CRing}\to\textup{Set}\,,\;\;\mathcal{R}\mathds{1}^{n-1}:=\{q\in\textup{M}_n(\mathcal{R}):q^2=q\,\textup{ and }\,\textup{im}(q)\,\textup{ is a rank–1 projective module}\}\,.
\end{equation}
This functor is representable by
\begin{equation}
    \mathbb{Z}[\{x_j^i\}_{i,j=1}^n]/I\,,
\end{equation}
where $I$ is the ideal generated by 
\begin{equation}
\sum_{i=1}^n x^j_ix^i_k-x^j_k\,,\;\;\sum_{i=1}^n x^i_i-1\,,\;\; x^j_kx^l_m-x^j_lx^k_m\,,\;\;\textup{for all}\;1\le j,k,l,m\le n\,.
\end{equation}
The tangent bundle of $\mathds{1}^{n-1}$ is the scheme 
\begin{equation}
\textup{T}\mathds{1}^{n-1}:\textup{CRing}\to\textup{Set}\,,\;\;\textup{T}\mathcal{R}\mathds{1}^{n-1}:=\mathcal{R}[\varepsilon]/(\varepsilon^2)\mathds{1}^{n-1}\,.
\end{equation}
There is a symplectic form on this scheme, which for $\mathcal{R}\in\textup{CRing}$ is given by\footnote{$ \textup{T}_q\mathcal{R}\mathds{1}^{n-1}=\{a\in\textup{M}_n(\mathcal{R}):qa+aq=a\}.$ }
\begin{equation}
    \Omega_q:\textup{T}_q\mathcal{R}\mathds{1}^{n-1}\otimes \textup{T}_q\mathcal{R}\mathds{1}^{n-1}\to \mathcal{R}\,,\;\;\Omega_q(a,b)\mapsto \textup{Tr}(q[a,b])\,,
\end{equation}
and letting $*:\mathcal{R}\to\mathcal{R}$ be any involution, there is a locally defined Hermitian metric given by
 \begin{equation}
     h_q:\textup{T}_q\mathcal{R}\mathds{1}^{n-1}\otimes \textup{T}_q\mathcal{R}\mathds{1}^{n-1}\to \mathcal{R}\,,\;\;h_q(a,b)=\frac{2}{\textup{Tr}(q^\dagger q)^{1/2}}\textup{Tr}(ab^\dagger)-\frac{1}{\textup{Tr}(q^\dagger q)^{3/2}}\textup{Tr}(aq^\dagger )\textup{Tr}(qb^\dagger)\,.
     \end{equation}
These are related by $h(a,b)=\Omega(a,I(b)),$ where
\begin{equation}
    I_q:\textup{T}_q\mathcal{R}\mathds{1}^{n-1}\to \textup{T}_q\mathcal{R}\mathds{1}^{n-1}\,,\;\;I_q(a):=\frac{[a^{\dagger},q]}{\textup{Tr}(q^{\dagger} q)^{1/2}}+\frac{\textup{Tr}(a^\dagger q)}{2}\frac{[q,q^\dagger]}{\textup{Tr}(q^{\dagger} q)^{3/2}}
\end{equation}
is integrable, satisfies $I^2=-1$ and $Ir=r^*I$ for all $r\in\mathcal{R}.$ Therefore, $\textup{T}_q\mathcal{R}\mathds{1}^{n-1}$ is a module over $\mathcal{R}[x,*]/(x^2+1),$ where $\mathcal{R}[x,*]$ is the skew–polynomial ring, ie. $xr=r^*x$ for all $r\in\mathcal{R}.$ For $\mathcal{R}=\mathbb{C},$ this is the quaternions and we obtain the hyperk\"{a}hler structure of $\textup{T}^*\mathbb{CP}^{n-1}.$
\\\\These formulas are only locally defined because they depend on a square root of $\textup{Tr}(q^\dagger q),$ for which there isn't a canonical choice unless $\textup{Tr}(q^\dagger q)>0.$ However, these formulas are globally defined on the ``cover"
\begin{equation}
    \textup{CRing}_*\to \textup{Set}\,,\;\;\mathcal{R}\tilde{\mathds{1}}^{n-1}:=\{(q,r)\in \mathcal{R}\mathds{1}^{n-1}\times\mathcal{R}:r=r^*\,\textup{ and }\,r^2\textup{Tr}(q^\dagger q)=1\}\,,
\end{equation}
with a potential given by $(q,r)\mapsto r\textup{Tr}(q^\dagger q).$\footnote{$\textup{CRing}_*$ is the category of commutative rings with involutions.} These formulas make sense for the quaternions as well, with $h,\Omega$ replaced by their real parts.
\\\\Specializing to $\mathcal{R}=\mathbb{C},$ the Hermitian metric \ref{h0} isometrically embeds into $\textup{M}_n(\mathbb{C})-\{0\},$ where the latter is equipped with the Hermitian metric defined by the same formula. Its real part is of the following form, with respect to the Hilbert–Schmidt inner product: let $\mathcal{H}$ be a Hilbert space. We have a Riemannian metric $g$ on $\mathcal{H}-\{0\},$ given by 
\begin{equation}\label{h0}
 g_x(v,w):=\textup{Re}\Big(\frac{2}{\|x\|}\langle v,w\rangle-\frac{1}{\|x\|^3}\langle v,x\rangle\langle x,w\rangle\Big)\,.
\end{equation}
Therefore, to get our desired isometric embedding into Euclidean space, it is enough to isometrically embed $g$ into Euclidean space for an arbitary Hilbert space, which we do as follows:
let $g_\mathbb{R},g_{\mathcal{H}},g_{\mathcal{L}(\mathcal{H})}$ be the standard real inner products on $\mathbb{R},\mathcal{H},\mathcal{L}(\mathcal{H}),$ respectively. Let $\mathbb{R}\oplus\mathcal{H}\oplus\mathcal{L}(\mathcal{H})_{\textup{self–adjoint}}$ be equipped with the Riemannian metric given by 
\begin{equation}
    \frac{5}{2}g_{\mathbb{R}}\oplus g_{\mathcal{H}}\oplus\frac{1}{2}g_{\mathcal{L}(\mathcal{H})}\,.
\end{equation}
Then  
\begin{equation}
(\mathcal{H}-\{0\},g)\to \mathbb{R}\oplus\mathcal{H}\oplus\mathcal{L}(\mathcal{H})_{\textup{self–adjoint}}\,,\;\;x\mapsto \|x\|^{1/2}(1,\|x\|^{-1}x,\|x\|^{-2}x\otimes x^*)
\end{equation}
is an isometric embedding, where $(x\otimes x^*)(y):=\langle x,y\rangle x.$
\subsection{Main Operator}
We define the main operator used in this paper, from which we will obtain the almost complex structure. For the most part, we work at the level of $^*$–rings, for which matrix $^*$–rings are a special case. For a ring $\mathcal{A},$ we think of the idempotents in $\mathcal{A}[\varepsilon]/(\varepsilon^2)$ as the tangent bundle of the idempotents in $\mathcal{A}$:
\begin{definition}
Let $(\mathcal{A},*)$ be a $^*$–ring.\footnote{That is, $^*$ is an additive involution and for all $x,y\in\mathcal{A},\,(xy)^*=y^*x^*.$} For all idempotents $q+\varepsilon a \in \mathcal{A}[\varepsilon]/(\varepsilon^2)$ such that $qq^*q\in Z(\mathcal{A})q,$ let
\begin{equation}
\mathcal{L}_q(a):=2[a^*,qq^*q]+[qa^*q,q^*]\,.
\end{equation}   
\end{definition}
The crux of this paper is the following identity, \cref{themainl}:
\begin{lemma}\label{mainnn}
In the context of the previous definition: $q+\varepsilon \mathcal{L}_q(a)$ is an idempotent and writing $qq^*q=rq$ for $r\in Z(\mathcal{A}),$ we have that
\begin{equation}\label{themainl}
    \mathcal{L}_q^2(a)=-4r^3a\,.
    \end{equation}
Furthermore, for all $s\in Z(\mathcal{A}),$
\begin{equation}\label{anticomp}
 \mathcal{L}_q(sa)=s^*\mathcal{L}_q(a)\,.  
\end{equation}
\end{lemma}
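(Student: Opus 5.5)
The semilinearity \cref{anticomp} is immediate, so I would dispose of it first. Since $s\in Z(\mathcal{A})$, we have $(sa)^*=a^*s^*=s^*a^*$, and $s^*$ is again central because $*$ is an anti-automorphism. Both terms of $\mathcal{L}_q$ are additive in $a^*$, so $s^*$ can be pulled out of both commutators, giving $\mathcal{L}_q(sa)=s^*\mathcal{L}_q(a)$.

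Next I would set up the standing relations. The condition that $q+\varepsilon a$ be idempotent is $q^2=q$ together with $qa+aq=a$. Multiplying the latter by $q$ on both sides gives $qaq=0$. Multiplying it by $1-q$ on both sides gives $(1-q)a(1-q)=0$. Hence
\[
a=qa(1-q)+(1-q)aq .
\]
From $qq^*q=rq$ I get two further identities. Applying $*$ gives $q^*qq^*=r^*q^*$. Evaluating $qq^*qq^*q$ in two ways gives $r^2q=rr^*q$. I would also rewrite the first term of $\mathcal{L}_q(a)$ as $2[a^*,qq^*q]=2r[a^*,q]$.

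For the idempotence claim I need $q\,\mathcal{L}_q(a)+\mathcal{L}_q(a)\,q=\mathcal{L}_q(a)$. A direct check shows that every commutator $[x,q]$ satisfies $q[x,q]+[x,q]q=[x,q]$, which handles the first term. For the second term $y=[qa^*q,q^*]$, I would expand $qy+yq-y$ into words in $q,q^*,a^*$. I would then reduce these words using $q^2=q$, $qq^*q=rq$, and the $*$-images $a^*q^*+q^*a^*=a^*$ and $q^*a^*q^*=0$ of the tangent relations. The expectation is that the terms cancel in pairs, with the central factor $r$ moving freely.

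The main step, and the expected obstacle, is $\mathcal{L}_q^2(a)=-4r^3a$. Set $b=\mathcal{L}_q(a)$, so that
\[
b^*=2r^*[q,a]+[q,\,q^*aq^*].
\]
Then $\mathcal{L}_q(b)=2r[b^*,q]+[qb^*q,q^*]$. I would compute each piece separately.

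1. The term $[b^*,q]$ is a double commutator. Here $[[q,a],q]=a$ by the tangent relation, which contributes a multiple of $rr^*a=r^2a$. The remaining part involves $q^*aq^*$.
2. For $qb^*q$ the first summand dies, because $q[q,a]q=0$. The second becomes $q(qq^*aq^*-q^*aq^*q)q=qq^*aq^*q-qq^*aq^*q$ after using $q^2=q$. This collapses, or else leaves terms to be rewritten through $qq^*q=rq$.

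To keep the cancellations manageable, I would organize the whole computation in the Peirce blocks $qXq$, $qX(1-q)$, $(1-q)Xq$ and $(1-q)X(1-q)$, inserting $a=qa(1-q)+(1-q)aq$. The aim is to show that all terms containing $q^*$ cancel and that the survivors sum to $-4r^3a$, using $r^2q=rr^*q$ wherever $r^*$ appears next to $q$. If the bookkeeping becomes unwieldy, a fallback is to verify the identity first in the universal case $\mathcal{A}=\textup{M}_n$ with $q$ a rank-one idempotent $q=vw^{T}$ where $w^{T}v=1$. Every word then reduces to scalar pairings, which would reveal the required cancellation pattern; I would then transcribe that pattern back into the abstract relations.
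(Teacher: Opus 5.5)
\textbf{There is a genuine gap in the main step, caused by a miscomputed adjoint.} Your semilinearity argument is fine, and so is your plan for idempotence; for the latter only $q^2=q$ and $qq^*q=rq$ are needed. The problem is $b^*$. Since $[a^*,q]^*=q^*a-aq^*=[q^*,a]$, the correct adjoint is
\[
b^*=\mathcal{L}_q(a)^*=2r^*[q^*,a]+[q,\,q^*aq^*],
\]
not $2r^*[q,a]+[q,q^*aq^*]$. Both of your numbered steps rest on the wrong formula.
\begin{itemize}
\item The double commutator in $2r[b^*,q]$ is $4rr^*[[q^*,a],q]$, not a multiple of $[[q,a],q]$. (In any case $[[q,a],q]=-a$, not $a$.)
\item In $qb^*q$ only the second summand vanishes. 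The first, $2r^*q[q^*,a]q=2r^*(qq^*aq-qaq^*q)$, is nonzero in general, and its commutator with $q^*$ is needed for the cancellation.
\end{itemize}
Followed as written, your plan yields $\mathcal{L}_q(b)=-4rr^*a+2r[[q,q^*aq^*],q]$, which is false. In $\textup{M}_3(\mathbb{C})$, take $q=vw^\dagger$ with $w^\dagger v=1$ and $v$ not proportional to $w$, so $r=|v|^2|w|^2>1$. Take $a=v\alpha^\dagger$ with $\alpha\perp v,w$. Then $q^*aq^*=0$, and your formula gives $-4r^2a$, whereas a direct computation gives $-4r^3a$.

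\textbf{How the paper's expansion actually works.} With the correct adjoint, the paper obtains
\[
\mathcal{L}_q^2(a)=4rr^*[[q^*,a],q]+2r[[q,q^*aq^*],q]+2r^*[\,q[q^*,a]q,\,q^*\,].
\]
A fourth term, $[q[q,q^*aq^*]q,q^*]$, is zero. The paper then substitutes $a=qa+aq$ and cancels. The factor $r^3$ does not come from ``all terms containing $q^*$ cancelling''. It comes from the part $-4r^2(aqq^*q+qq^*qa)=-4r^3(aq+qa)=-4r^3a$ of $-4r^2(aq^*q+qq^*a)$. The remaining words $q^*aq$, $qaq^*$, $qaq^*q$, $qq^*aq$, $qq^*aqq^*$, $q^*qaq^*q$ cancel in pairs. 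That cancellation needs the following relations.
\begin{itemize}
\item $rqq^*=r^*qq^*$ and $rq^*q=r^*q^*q$, obtained by evaluating $qq^*qq^*$ and $q^*qq^*q$ in two ways.
\item The adjoint $r^{*2}q^*=rr^*q^*$ of $r^2q=rr^*q$. This is what converts the terms $-2r^{*2}qaq^*$ and $-2r^{*2}q^*aq$ coming from the third summand. Applying $r^2q=rr^*q$ only where $r^*$ sits next to $q$ does not reach them.
\end{itemize}

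\textbf{Your fallback is not a proof.} The hypothesis is only $qq^*q\in Z(\mathcal{A})q$ in an arbitrary $^*$-ring. A rank-one matrix idempotent satisfies much stronger relations, such as $qxq=\textup{Tr}(qx)q$ for all $x$. So $\textup{M}_n$ is not a universal model, and a check there does not transfer automatically.
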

\begin{proof}
We prove this result in \cref{mainproof}.
\end{proof}
For example, let $(\mathcal{R},*)$ be either a commutative ring with involution or a quaternion algebra.  Then the hypothesis of the previous definition is satisfied for all idempotents in $(M_n(\mathcal{R}),\,\dagger)$ whose images are rank–1 as right $\mathcal{R}$–modules. 
\section{Star Rings}
We will discuss the necessary theory of $^*$–rings. In particular, $^*$–rings come with a notion of rank–1 projection, which have a hyperk\"{a}hler–like structure.\footnote{Many of the definitions we give, eg. that of states, are just generlizations of the analogous $C^*$–algebra notion. For a textbook treatment, see \ref{mur}.} 
\subsection{Basic Definitions}
\begin{definition}
A $^*$–ring is a pair $(\mathcal{A},*),$ where
\begin{enumerate}
    \item $\mathcal{A}$ is a ring,
    \item $*:\mathcal{A}\to\mathcal{A}$ is an additive involution such that $(ab)^*=b^*a^*$ for all $a,b\in\mathcal{A}.$
\end{enumerate}
A morphism of $^*$–rings $f:(\mathcal{A},*_{\mathcal{A}})\to(\mathcal{B},*_{\mathcal{B}})$is a morphism of the underlying rings such that 
\begin{equation}\label{center}
f(Z(\mathcal{A}))\subset Z(\mathcal{B})
\end{equation}
and such that for all $x\in\mathcal{A},\,f(x^{*_{\mathcal{A}}})=f(x)^{*_{\mathcal{B}}}.$
\\\\We call $x\in\mathcal{A}$ self–adjoint if $x^*=x.$\footnote{The condition \ref{center} is not standard, but we will be needing it.}
\end{definition}
\begin{definition}
We let $\textup{CAlg}_{(\mathcal{R},*)}$ denote the category of commutative algebras with involutions over $(\mathcal{R},*).$ That is, the objects are pairs $(\mathcal{S},*_{\mathcal{S}})$ such that 
\begin{enumerate}
    \item $\mathcal{S}$ is a commutative $\mathcal{R}$–algebra,
    \item $*_{\mathcal{S}}$ is an involution and homomorphism on $\mathcal{S}$,
    \item the canonical morphism $f:\mathcal{R}\to\mathcal{S}$ satisfies $f(r^*)=f(r)^{*_\mathcal{S}}.$
\end{enumerate}  Morphisms are morphisms of the underlying $\mathcal{R}$–algebras that respect the involutions. We let $\textup{Set}_{*}$ denote the category of sets with involutions. Morphisms are morphisms of the underlying sets that respect the involutions.
\end{definition}
\subsubsection{Examples of $^*$–Rings}
\begin{exmp}\label{matrixex}
Let $(\mathcal{A},*)$ be a $^*$–ring, eg. $\mathbb{R},\mathbb{C},\mathbb{H}.$ Then $\textup{M}_n(\mathcal{A})$ is another $^*$–ring with the  involution given by the conjugate–transpose $\dagger.$
\end{exmp}
This next example will be very important:
\begin{exmp}
Let $(\mathcal{A},*)$ be a $^*$–ring. Then $\mathcal{A}[x]/(x^2+r)$ is another $^*$–ring with involutions given by $(a+xb)^*=a^*\pm xb^*.$
\end{exmp}
More generally, the tensor product of $^*$–rings with the same centers is a $^*$–ring:
\begin{exmp}
Let $(\mathcal{A},*_{\mathcal{A}}),\,(\mathcal{B},*_{\mathcal{B}})$ be $^*$–rings with an isomorphism $(Z(\mathcal{A}),*_{\mathcal{A}})\cong (Z(\mathcal{B}),*_{\mathcal{B}}).$ Then $\mathcal{A}\otimes_{Z(\mathcal{A})}\mathcal{B}$ is a $^*$–ring with the involution given by
$(a\otimes b)^*=a^{*_{\mathcal{A}}}\otimes b^{*_{\mathcal{B}}}.$
\end{exmp}
Quaternion algebras are of the following form:
\begin{exmp}
Let $(\mathcal{R},*)$ be a commutative $^*$–ring with involution and consider the skew–polynomial ring $(\mathcal{R}[x],*),$ ie. $xr=r^*x$ for all $r\in\mathcal{R}.$ Let $r\in\mathcal{R}$ be self–adjoint. Then $(\mathcal{R}[x],*)/(x^2+r)$ is a $^*$–ring with the involution given by
\begin{equation}
    (a+xb)^*=a^*-xb\,.
\end{equation}
We have a ``norm" given by
\begin{equation}
N:(\mathcal{R}[x],*)/(x^2+r)\to \mathcal{R}_{\textup{self–adjoint}}\,,\;\;N(w)=w^*w\,,
\end{equation}
ie. $N(wz)=N(w)N(z)$ for all $w,z\in \mathcal{R}[x]/(x^2+r).$
\end{exmp}
The examples of $^*$–rings that we will focus on are of the form $\textup{M}_n(\mathcal{R}),\,\textup{M}_n((\mathcal{R}[x],*)/(x^2+r)),$ for a commutative ring $\mathcal{R}$ with involution.
\subsection{Projection Space}
We will now define projection space.
\begin{definition}
Let $(\mathcal{A},*)$ be a $^*$–ring. We define the set of projections to be
\begin{equation}
    \mathbb{I}(\mathcal{A}):=\{q\in\mathcal{A}:q^2=q\}.
\end{equation}
We define the set of rank–1 projections to be
\begin{equation}
    \mathds{1}(\mathcal{A}):=\{q\in \mathbb{I}(\mathcal{A}):\textup{The map }Z(\mathcal{A})\to q\mathcal{A}q,\,r\mapsto rq,\textup{ is a bijection}\}\,.
\end{equation}
We let $\mathbb{P}(\mathcal{A})$ denote the set of self–adjoint rank–1 projections.
\end{definition}
For any $q\in\mathbb{I}(\mathcal{A}),$ $q\mathcal{A}q$ is an algebra over $Z(\mathcal{A}),$ called the corner algebra. We can alternatively say $q$ is a rank–1 projection if
\begin{equation}
    Z(\mathcal{A})\cong q\mathcal{A}q\,.
\end{equation}
Rank–1 projections satisfy the hypothesis of \cref{mainnn} since, in particular, $qq^*q\in Z(\mathcal{A})q.$
\\\\For a given $^*$–ring $(\mathcal{A},*),$ we have functors $\textup{CAlg}_{(Z(\mathcal{A}),*)}\to\textup{Set}_*$ given by
\begin{align}
&(\mathcal{R},*)\mapsto \mathbb{I}(\mathcal{A}\otimes_{Z(\mathcal{A})}\mathcal{R}) \,, 
\\&(\mathcal{R},*)\mapsto \mathds{1}(\mathcal{A}\otimes_{Z(\mathcal{A})}\mathcal{R})    \,.
\end{align} 
\begin{remark}
Rank–1 projections in $\textup{M}_n(\mathbb{R}),\textup{M}_n(\mathbb{C})$ are equivalent to projections whose images are $1$–dimensional. However, $\textup{M}_n(\mathbb{H})$ doesn't have any rank–1 projections. Nevertheless, in the context of this paper, the space of projections whose images are $1$–dimensional as right $\mathbb{H}$–modules behaves similarly enough.
\end{remark}
Any rank–1 projection defines a map of $Z(\mathcal{A})$–algebras. Quantum mechanically, they are non–Hermitian states:
\begin{definition}\label{rhoboat}
For $q\in\mathds{1}(\mathcal{A}),$ we define a $Z(\mathcal{A})$–module map
\begin{equation}
\rho_q:\mathcal{A}\to Z(\mathcal{A})\,,
\end{equation}
where $\rho_q(x)$ is the unique solution of 
\begin{equation}
qxq=\rho_q(x)q\,.
\end{equation}
\end{definition}
\subsubsection{Tangent Bundle of Functors}
Since projection space defines a functor, we can define its tangent bundle by analogy with the functor of points of the tangent bundle of a scheme (\ref{mum}):
\begin{definition}
For a commutative ring with involution $(\mathcal{R},*),$ consider a functor
\begin{equation}
\mathcal{F}:\textup{CAlg}_{(\mathcal{R},*)}\to   \textup{Set}_{*}\,.
\end{equation}
We define its tangent bundle to be the functor 
\begin{equation}
\textup{T}\mathcal{F}:\textup{CAlg}_{(\mathcal{R},*)}\to   \textup{Set}_{*}\,,\;\;\textup{T}\mathcal{F}(\mathcal{S},*)=\mathcal{F}(\mathcal{S}[\varepsilon]/(\varepsilon^2), *)\,,   
\end{equation}
where $(s+\varepsilon t)^*=s^*+\varepsilon t^*.$ The tangent space $\textup{T}_p\mathcal{F}$ at $p\in \mathcal{F}(\mathcal{S}, *)$ is the fiber of 
\begin{equation}
 \textup{T}\mathcal{F}(\mathcal{S},*)\to \mathcal{F}(\mathcal{S},*)   
\end{equation}
over $p.$
\end{definition}
\begin{definition}
Let $\mathcal{F},\mathcal{G}$ be functors $\textup{CAlg}_{(\mathcal{R},*)}\to   \textup{Set}_{*}.$
We say that a natural transformation 
\begin{equation}
\eta:\mathcal{F}\to\mathcal{G}
\end{equation}
is \'{e}tale if for any $(\mathcal{S},*)\in\textup{CAlg}_{(\mathcal{R},*)}$ and any $p\in \mathcal{F}(\mathcal{S},*),$ the induced map
\begin{equation}
\textup{T}_p\mathcal{F}(\mathcal{S},*)\to \textup{T}_{\eta_{(\mathcal{S},*)}(p)}\mathcal{G}(\mathcal{S},*)
\end{equation}
is a bijection.
\end{definition}
\subsubsection{Tangent Bundle of Projection Space}
We have the following simple characterization of the tangent bundle of projection space:
\begin{proposition}\label{vectoridentity}
Let $a\in\mathcal{A}.$ Then $a\in \textup{T}_q\mathbb{I}(\mathcal{A})$ if and only if $aq+qa=a.$ In particular, for all $x\in\mathcal{A},$ $[q,x]\in \textup{T}_q\mathbb{I}(\mathcal{A}).$ 
\end{proposition}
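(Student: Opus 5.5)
The plan is to unwind the definition of the tangent space: $a\in\textup{T}_q\mathbb{I}(\mathcal{A})$ means precisely that $q+\varepsilon a$ is an idempotent in $\mathcal{A}[\varepsilon]/(\varepsilon^2)$. Here we take the tangent bundle of the functor $\mathbb{I}$, evaluated on $\mathcal{A}\otimes_{Z(\mathcal{A})}Z(\mathcal{A})[\varepsilon]/(\varepsilon^2)\cong\mathcal{A}[\varepsilon]/(\varepsilon^2)$, and the fiber over $q$. So the first step is simply to make this identification explicit. The element $\varepsilon$ is central and $\varepsilon^2=0$, so every element of this ring is uniquely of the form $x+\varepsilon y$ with $x,y\in\mathcal{A}$.

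The second step is a direct expansion:
\begin{equation*}
(q+\varepsilon a)^2=q^2+\varepsilon(qa+aq)+\varepsilon^2a^2=q^2+\varepsilon(qa+aq)\,,
\end{equation*}
using the centrality of $\varepsilon$ and $\varepsilon^2=0$. Comparing with $q+\varepsilon a$ coefficient by coefficient, which the uniqueness of the decomposition justifies, gives two conditions. The constant-term condition is $q^2=q$, and this holds since $q\in\mathbb{I}(\mathcal{A})$. The $\varepsilon$-term condition is $qa+aq=a$. Hence $q+\varepsilon a$ is idempotent if and only if $aq+qa=a$, which proves the equivalence.

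For the second claim, I take $a=[q,x]=qx-xq$ and verify the condition directly. We have $qa=qx-qxq$, using $q^2=q$, and $aq=qxq-xq$. Adding these gives $qa+aq=qx-xq=a$, so $[q,x]\in\textup{T}_q\mathbb{I}(\mathcal{A})$ by the first part.

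I do not expect a genuine obstacle here. The only point requiring care is the bookkeeping identification of the tangent fiber with $\{a:\ q+\varepsilon a\in\mathbb{I}(\mathcal{A}[\varepsilon]/(\varepsilon^2))\}$, which follows directly from the definition of the tangent functor. The involution plays no role in this statement.
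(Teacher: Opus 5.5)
Your proposal is correct and follows the paper's proof: expand $(q+\varepsilon a)^2=q^2+\varepsilon(qa+aq)$ and compare coefficients for the equivalence, then verify $q[q,x]+[q,x]q=[q,x]$ directly for the second claim. Your explicit identification $\mathcal{A}\otimes_{Z(\mathcal{A})}Z(\mathcal{A})[\varepsilon]/(\varepsilon^2)\cong\mathcal{A}[\varepsilon]/(\varepsilon^2)$ fills in bookkeeping that the paper leaves implicit.
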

\begin{proof}
The first part follow from the expansion $(q+\varepsilon a)^2=q^2+\varepsilon (aq+qa).$ The second part follows from the first part by computing $q[q,x]+[q,x]q.$
\end{proof}
In the following sense, $\mathds{1}(\mathcal{A})$ is a connected component of $\mathbb{I}(\mathcal{A})$:
\begin{proposition}
If $q\in \mathds{1}(\mathcal{A})$ and $a\in\textup{T}_q\mathbb{I}(\mathcal{A})$ then $a\in\textup{T}_q\mathds{1}(\mathcal{A}).$
\end{proposition}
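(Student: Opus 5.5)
The plan is to reduce the claim to the rank–1 property of $q$ itself, by showing that $p:=q+\varepsilon a$ is conjugate to $q$ inside $\mathcal{A}[\varepsilon]/(\varepsilon^2)$ by an invertible element. Conjugation by an invertible element fixes the center and carries corner algebras to corner algebras. Here $\textup{T}_q\mathds{1}(\mathcal{A})$ means that $p$ is a rank–1 projection of $\mathcal{A}\otimes_{Z(\mathcal{A})}Z(\mathcal{A})[\varepsilon]/(\varepsilon^2)=\mathcal{A}[\varepsilon]/(\varepsilon^2)$. So we must show that $r+\varepsilon s\mapsto (r+\varepsilon s)p$ is a bijection from $Z(\mathcal{A})[\varepsilon]/(\varepsilon^2)$ onto $p\,\mathcal{A}[\varepsilon]\,p$. (Its center contains $Z(\mathcal{A})[\varepsilon]/(\varepsilon^2)$, which is the ring the definition uses via the tensor product.)

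\emph{Step 1 (an identity for tangent vectors).} By \cref{vectoridentity}, $aq+qa=a$. Multiplying on both sides by $q$ gives $2qaq=qaq$, hence $qaq=0$.

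\emph{Step 2 (infinitesimal conjugation).} Set $b:=[a,q]$ and $u:=1+\varepsilon b$, which is invertible with $u^{-1}=1-\varepsilon b$. Then $uqu^{-1}=q+\varepsilon[b,q]$. Expanding,
\[
[b,q]=aq-2qaq+qa=a
\]
by Step 1 and the tangent condition. Hence $p=uqu^{-1}$, and therefore
\[
p\,\mathcal{A}[\varepsilon]\,p=u\,\big(q\,\mathcal{A}[\varepsilon]\,q\big)\,u^{-1}.
\]

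\emph{Step 3 (transfer rank–1).} We have $q\,\mathcal{A}[\varepsilon]\,q=q\mathcal{A}q+\varepsilon\, q\mathcal{A}q$. Since $q\in\mathds{1}(\mathcal{A})$, this equals $Z(\mathcal{A})q+\varepsilon Z(\mathcal{A})q=(Z(\mathcal{A})[\varepsilon]/(\varepsilon^2))\,q$, and the map $r+\varepsilon s\mapsto (r+\varepsilon s)q$ is injective: if $rq+\varepsilon sq=0$ then $rq=sq=0$, so $r=s=0$. Conjugating by $u$ fixes the central elements $r+\varepsilon s$ and sends $(r+\varepsilon s)q$ to $(r+\varepsilon s)p$. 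Since conjugation is a bijection, the map $r+\varepsilon s\mapsto(r+\varepsilon s)p$ is a bijection onto $p\,\mathcal{A}[\varepsilon]\,p$, i.e.\ $p$ is rank–1.

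The only nonroutine point is Step 2, namely finding the conjugating element. The identity $qaq=0$ from Step 1 is exactly what makes $b=[a,q]$ work, so no real obstacle is expected. One minor bookkeeping check remains: the involution plays no role here, and the center used is the one specified by the functor, $Z(\mathcal{A})[\varepsilon]/(\varepsilon^2)$.
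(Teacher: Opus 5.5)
Your proof is correct, but it takes a different route from the paper.

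\textbf{The paper's route.} It computes directly. For $x\in\mathcal{A}$ it expands $(q+\varepsilon a)x(q+\varepsilon a)$, and uses $qxq=\rho_q(x)q$ together with $a=aq+qa$ to rewrite $axq$ and $qxa$. This gives
\[
(q+\varepsilon a)x(q+\varepsilon a)=\big(\rho_q(x)+\varepsilon(\rho_q(ax)+\rho_q(xa))\big)(q+\varepsilon a),
\]
which proves surjectivity; injectivity is checked separately.

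\textbf{Your route.} You observe that $a=[[a,q],q]$ because $qaq=0$. Hence $q+\varepsilon a=uqu^{-1}$ with $u=1+\varepsilon[a,q]$. You then transport the easy rank--1 property of $q$ in $\mathcal{A}[\varepsilon]/(\varepsilon^2)$ along this inner automorphism.

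\textbf{What each buys.} Your version is more conceptual. It gets surjectivity and injectivity in one stroke. It handles all of $\mathcal{A}[\varepsilon]/(\varepsilon^2)$ at once, whereas the paper writes out only $x\in\mathcal{A}$ and leaves the $\varepsilon\mathcal{A}$ part implicit. It also shows that every first-order deformation of an idempotent is an infinitesimal inner conjugation, so any conjugation-invariant property of idempotents passes to tangent vectors.

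The paper's computation, in exchange, produces the explicit first-order formula $\rho_{q+\varepsilon a}(x)=\rho_q(x)+\varepsilon\big(\rho_q(ax)+\rho_q(xa)\big)$ for the derivative of the state. This formula is what is used later to solve for $s$ in the proof that $\tilde{\mathds{1}}\to\mathds{1}$ is \'etale. Your approach recovers it only after an extra line computing $\rho_q(u^{-1}xu)$.

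\textbf{A small precision.} $Z(\mathcal{A}[\varepsilon]/(\varepsilon^2))$ is actually equal to $Z(\mathcal{A})[\varepsilon]/(\varepsilon^2)$, not merely a ring containing it; compare coefficients in $[x+\varepsilon y,c]=0$ for $c\in\mathcal{A}$. This equality is what makes your bijection onto $p\,\mathcal{A}[\varepsilon]\,p$ exactly the condition in the definition of $\mathds{1}$.
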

\begin{proof}
Let $q\in\mathds{1}(\mathcal{A}),\,a\in \textup{T}_q\mathbb{I}(\mathcal{A}),\,x\in\mathcal{A}.$
Then using the defining property of rank–1 projections and that $aq+qa=a,$ we compute 
\begin{align}
    (q+\varepsilon a)x(q+\varepsilon a)=&\rho_q(x)q+\varepsilon (axq+qxa)
    \\&=\rho_q(x)q+\varepsilon (\rho_q(ax)q+\rho_q(xa)q+\rho_q(x)aq+\rho_q(x)qa)
\\&=\Big(\rho_q(x)+\varepsilon \big(\rho_q(ax)+\rho_q(xa)\big)\Big)(q+\varepsilon a)\,.
\end{align}
Therefore, the map 
\begin{equation}
    Z(\mathcal{A}[\varepsilon]/(\varepsilon^2))\to(q+\varepsilon a)\mathcal{A}[\varepsilon]/(\varepsilon^2)(q+\varepsilon a)
    \end{equation}
is surjective. It is injective since $(r+\varepsilon s)(q+\varepsilon a)=0$ implies that $rq=sq=0,$ which implies that $r=s=0.$
\end{proof}
\subsection{Geometry of Rank–1 Projections}
For a rank–1 projection $q,$ we can rewrite \cref{themainl} as 
\begin{equation}
    \mathcal{L}_q(a)=2\rho_q(q^*)[a^*,q]+\rho_q(a^*)[q,q^*].
\end{equation}
\Cref{mainnn} then implies that $\textup{T}_q\mathds{1}(\mathcal{A})$ is naturally a representation of 
\begin{equation}
Z(\mathcal{A})[x,*]/(x^2+4\rho_q(q^*)^3)\,,
\end{equation}
where $Z(\mathcal{A})[x,*]$ is the skew–polynomial ring, ie. $xr=r^*x$ for $r\in Z(\mathcal{A}).$
There is a compatible closed 2–form:
\begin{definition}
We define a pointwise alternating and $Z(\mathcal{A})$–bilinear map 
\begin{equation}
\omega:\textup{T}\mathds{1}(\mathcal{A})\otimes \textup{T}\mathds{1}(\mathcal{A})\to Z(\mathcal{A})\,,\;\;\omega_q(a,b)=\rho_q([a,b])\,.
\end{equation}
\end{definition}
\begin{definition}
Let $(\mathcal{A},*)$ be a $^*$–ring. A trace is a $Z(\mathcal{A})$–module morphism
\begin{equation}
    \rho:\mathcal{A}\to Z(\mathcal{A})
\end{equation}
such that $\rho(xy)=\rho(yx)$ for all $x,y\in\mathcal{A}$ and such that $\rho(x^*)=\rho(x)^*$ for all $x\in\mathcal{A}.$  
\end{definition}
\begin{proposition}
Let $\rho$ be a trace on $\mathcal{A}.$ Then for all $q\in\mathds{1}(\mathcal{A})$ and $x\in\mathcal{A},$
\begin{equation}
    \rho(qx)=\rho_q(x)\rho(q)\,.
\end{equation}
Furthermore, for all $q\in\mathbb{I}(\mathcal{A})$ and $a\in\textup{T}_q\mathbb{I}(\mathcal{A}),$ $\rho(a)=0.$
\end{proposition}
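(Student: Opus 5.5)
The first claim follows from cyclicity and centrality. Since $q^2=q$ and $q$ is rank–1, we have $qxq=\rho_q(x)q$. Applying the trace and using $\rho(xy)=\rho(yx)$ gives
\[
\rho(qx)=\rho(q\cdot qx)=\rho(qx\cdot q)=\rho(qxq)=\rho(\rho_q(x)q).
\]
Because $\rho$ is a $Z(\mathcal{A})$–module morphism and $\rho_q(x)\in Z(\mathcal{A})$, the last term equals $\rho_q(x)\rho(q)$. The only facts used are idempotency of $q$, the trace property applied to the pair $(q,\,qx)$, the defining relation of $\rho_q$ from \cref{rhoboat}, and $Z(\mathcal{A})$–linearity.

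For the second claim, take $q\in\mathbb{I}(\mathcal{A})$ and $a\in\textup{T}_q\mathbb{I}(\mathcal{A})$. By \cref{vectoridentity}, $aq+qa=a$. I would first show that $qaq=0$. Multiplying $qa+aq=a$ on the left by $q$ gives $qa+qaq=qa$, so $qaq=0$. Taking the trace of $a=aq+qa$ and using $\rho(aq)=\rho(qa)$ gives $\rho(a)=2\rho(qa)$. Cyclicity and idempotency then give $\rho(qa)=\rho(q\cdot qa)=\rho(qa\cdot q)=\rho(qaq)=\rho(0)=0$. Hence $\rho(a)=0$.

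There is also a more symmetric route that avoids the factor of $2$ altogether. From $qaq=0$ one gets $a=qa(1-q)+(1-q)aq$. Each summand has zero trace: for example, $\rho(qa(1-q))=\rho((1-q)qa)=0$ because $(1-q)q=0$, and similarly for the other term. This avoids dividing by $2$, and I would present this version, since $2$ need not be invertible in $Z(\mathcal{A})$.

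I expect no real obstacle. The only subtle point is this division by $2$, which the decomposition into $qa(1-q)$ and $(1-q)aq$ sidesteps; the first claim is a direct one-line computation.
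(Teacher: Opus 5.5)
Your proof is correct and follows the paper's argument: the paper also obtains the first claim by applying $\rho$ to both sides of $qxq=\rho_q(x)q$, and the second from $qa+aq=a$ together with $qaq=0$. Your worry about dividing by $2$ is unfounded. Your first route only uses $\rho(a)=2\rho(qa)=2\cdot 0=0$, and in fact $\rho(qa)=\rho(aq)=\rho(qaq)=0$ separately, so the decomposition $a=qa(1-q)+(1-q)aq$ is valid but unnecessary.
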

\begin{proof}
The first part follows from applying $\rho$ to both sides of $qxq=\rho_q(x)q.$ The second part follows from the equation $qa+aq=a$ and the fact that $qaq=0.$
\end{proof}
As a result, $x\mapsto \rho_q(x),\,x\mapsto\rho(qx)$ differ by the multiplication of a ``locally constant" function. In particular, given a trace we can define a closed 2–form on $\mathbb{I}(\mathcal{A})$:
\begin{definition}
Let $\rho$ be a trace on $\mathcal{A}.$ We define 
\begin{equation}
\Omega:\textup{T}\mathbb{I}(\mathcal{A})\otimes \textup{T}\mathbb{I}(\mathcal{A})\to Z(\mathcal{A})\,,\;\;\Omega_q(a,b)=\rho(q[a,b])\,.
\end{equation}
\end{definition}For matrix algebras over commutative rings, $\omega=\Omega\vert_{\mathds{1}(\mathcal{A})}$ with respect to the usual trace. Because of this and the minor simplifications it provides, we will switch the emphasis from the map $x\to\rho_q(x)$ to the map $x\to\rho(qx).$
\\\\Note that, $\Omega$ naturally extends to a 2–form $\textup{T}\mathcal{A}\otimes\textup{T}\mathcal{A}\to Z(\mathcal{A})$ and we can associate to it a 3–form, its ``exterior derivative", ie. 
\begin{equation}
\textup{T}\mathcal{A}\otimes\textup{T}\mathcal{A}\otimes\textup{T}\mathcal{A}    \to Z(\mathcal{A})\,,\;\;(a,b,c)\mapsto \rho(a[b,c])-\rho(b[a,c])+\rho(c[a,b])  \,. 
\end{equation} 
Its pullback to $\mathbb{I}(\mathcal{A})$ is zero:
\begin{proposition}\label{closed}
For $a,b,c\in\textup{T}_q\mathbb{I}(\mathcal{A}),$
\begin{equation}\label{closed}
\rho(a[b,c])-\rho(b[a,c])+\rho(c[a,b])=0\,.
\end{equation}
\end{proposition}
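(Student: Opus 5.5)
The plan is to reduce the identity to the vanishing of $\rho$ on triple products of tangent vectors, using the Peirce decomposition with respect to $q$.

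\textbf{Step 1: tangent vectors are off-diagonal.} By \cref{vectoridentity}, $a\in\textup{T}_q\mathbb{I}(\mathcal{A})$ means $aq+qa=a$. Multiplying this on both sides by $q$ gives $qaq=0$. Substituting $qaq=0$ back into $a-qa-aq+qaq=0$ gives $(1-q)a(1-q)=0$. Hence
\[
a=qa(1-q)+(1-q)aq,
\]
so $a$ lies in the off-diagonal Peirce part $q\mathcal{A}(1-q)\oplus(1-q)\mathcal{A}q$. The same holds for $b$ and $c$.

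\textbf{Step 2: parity of products.} Give $q\mathcal{A}q\oplus(1-q)\mathcal{A}(1-q)$ parity $0$ and the off-diagonal part parity $1$. Multiplication respects this $\mathbb{Z}/2$-grading, since $q(1-q)=0$ kills all mismatched products. A product of three off-diagonal elements therefore has odd parity. In particular $abc$ and $acb$ are off-diagonal.

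\textbf{Step 3: $\rho$ vanishes on off-diagonal elements.} The trace property gives $\rho(qx(1-q))=\rho(x(1-q)q)=0$, and likewise $\rho((1-q)xq)=0$. This generalizes the second part of the preceding proposition. Combined with Step 2, we get
\[
\rho(abc)=\rho(acb)=0 .
\]

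\textbf{Step 4: reduce the left-hand side of \cref{closed}.} Expand each commutator and use cyclicity of $\rho$: $\rho(bca)=\rho(abc)$, $\rho(cab)=\rho(abc)$, $\rho(bac)=\rho(acb)$, and $\rho(cba)=\rho(acb)$. The left-hand side becomes
\[
\bigl(\rho(abc)-\rho(acb)\bigr)-\bigl(\rho(acb)-\rho(abc)\bigr)+\bigl(\rho(abc)-\rho(acb)\bigr)=3\bigl(\rho(abc)-\rho(acb)\bigr).
\]
By Step 3 this is zero. Since each triple product vanishes individually, no division by $3$ is needed.

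\textbf{Expected difficulty.} There is no real obstacle. The only point needing care is Step 1: one must derive both $qaq=0$ and $(1-q)a(1-q)=0$ from $aq+qa=a$ alone, without assuming $q$ is rank one or self-adjoint. This matters because the statement is for all of $\mathbb{I}(\mathcal{A})$.
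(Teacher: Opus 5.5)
Your proof is correct and is essentially the paper's argument. The paper also shows that $\rho$ vanishes on every triple product of tangent vectors: it expands $abc=qabqc+aqbcq$, which is exactly the off-diagonal decomposition your grading produces, and then uses cyclicity with $qxq=0$, so each term vanishes individually and your reduction to $3\bigl(\rho(abc)-\rho(acb)\bigr)$ is harmless but unnecessary.
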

\begin{proof}
We have $abc=(qa+aq)(qb+bq)(qc+cq)=qabqc+aqbcq.$ Therefore, by the cyclic property of $\rho,$
\begin{equation}
\rho(abc)=\rho(qabqc)+\rho(aqbcq)=0\,,
\end{equation}
which implies that each term in \cref{closed} is zero. 
\end{proof}
\begin{proposition}
Let $\rho$ be a trace on $\mathcal{A}$ such that $\rho\vert_{\mathds{1}(\mathcal{A})}=1.$ Then for all $a,b\in\textup{T}_q\mathds{1}(\mathcal{A}),$
\begin{equation}
  \Omega_q(\mathcal{L}_q(a),b)=\Omega_q(\mathcal{L}_q(b),a)^*\,.  
\end{equation}
\end{proposition}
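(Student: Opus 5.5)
The plan is to compute $\Omega_q(\mathcal{L}_q(a),b)$ explicitly, reduce it to a manifestly "Hermitian-symmetric" expression in $a,b$, and then read off the identity. I will use the rank–1 form of the operator,
\begin{equation*}
\mathcal{L}_q(a)=2r[a^*,q]+\rho_q(a^*)[q,q^*]\,,\qquad r:=\rho_q(q^*)\,,
\end{equation*}
together with the following facts, valid for $q\in\mathds{1}(\mathcal{A})$ and $a,b\in\textup{T}_q\mathds{1}(\mathcal{A})$:
\begin{enumerate}
\item $qa+aq=a$ (\cref{vectoridentity}), hence $qaq=0$, and taking adjoints $q^*a^*+a^*q^*=a^*$;
\item $\rho(qx)=\rho_q(x)\rho(q)=\rho_q(x)$, since $\rho(q)=1$;
\item $\rho(qb)=\rho(qbq)=0$;
\item $r$ is self–adjoint, because $r=\rho(qq^*q)=\rho(qq^*)$ and $\rho(qq^*)^*=\rho(qq^*)$;
\item $Z(\mathcal{A})$ is commutative.
\end{enumerate}

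\textbf{Step 1 (first term).} Expand $\rho(q[[a^*,q],b])$. This gives
\begin{equation*}
\rho(qa^*qb)-\rho(qa^*b)-\rho(qba^*q)+\rho(qbqa^*)\,.
\end{equation*}
The last term vanishes because $qbq=0$. The first term is $\rho_q(a^*)\rho(qb)=0$. By cyclicity, $\rho(qba^*q)=\rho(a^*qb)$. Using $b=qb+bq$, what remains collapses to
\begin{equation*}
-\big(\rho(a^*qb)+\rho(a^*bq)\big)=-\rho(a^*b)\,.
\end{equation*}
So the first contribution is $-2r\,\rho(a^*b)$.

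\textbf{Step 2 (second term).} Expand $\rho(q[[q,q^*],b])$ in the same way. This gives
\begin{equation*}
\rho(qq^*b)-r\rho(qb)-\rho(qbqq^*)+\rho(qbq^*q)=\rho(qq^*b)+\rho(q^*qb)\,,
\end{equation*}
using $\rho(qb)=0$, $qbq=0$ and cyclicity. Writing $qb+bq=b$ and applying cyclicity once more, this equals $\rho(q^*b)$. Moreover $\rho_q(a^*)=\rho(qa^*)=\rho(aq^*)^*=\rho(q^*a)^*$. Altogether,
\begin{equation*}
\Omega_q(\mathcal{L}_q(a),b)=-2r\,\rho(a^*b)+\rho(q^*a)^*\rho(q^*b)\,.
\end{equation*}

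\textbf{Step 3 (compare with the swapped expression).} Swapping $a$ and $b$ and applying $*$ gives
\begin{equation*}
-2r^*\rho(b^*a)^*+\rho(q^*b)\,\rho(q^*a)^*\,.
\end{equation*}
Here $r^*=r$ by item 4, and the product of central elements commutes by item 5. Also $\rho(b^*a)^*=\rho(a^*b)$ by the trace axiom. The two expressions therefore coincide, which proves the statement.

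The main obstacle is the bookkeeping in Step 1: terms such as $\rho(qa^*b)$ and $\rho(a^*bq^*)$ do not visibly match until one applies the tangency relation in the right form, namely $b=qb+bq$ (or its adjoint $a^*=q^*a^*+a^*q^*$). This relation is exactly what collapses everything to the symmetric quantity $\rho(a^*b)$, and I expect most of the care in the write-up to go into applying it correctly.
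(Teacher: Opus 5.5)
Your proof is correct and is essentially the paper's argument: you expand $\rho(q[\mathcal{L}_q(a),b])$ using $qbq=0$, $b=qb+bq$ and cyclicity to reach $-2r\,\rho(a^*b)+\rho(qa^*)\rho(q^*b)$ with $r=\rho(qq^*)=\rho(q^*q)$, then compare it with the conjugate of the swapped expression. The paper phrases this last comparison through the antisymmetry of $\Omega$, and it uses implicitly the self-adjointness of $r$ and the commutativity of $Z(\mathcal{A})$, which you state explicitly.
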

For matrix $^*$–rings over quaternion algebras, this result is still true when replacing $\mathds{1}(\mathcal{A})$ with projections whose images are one–dimensional. Furthermore, the assumption $\rho\vert_{\mathds{1}(\mathcal{A})}=1$ is only used to slightly simplify formulas. 
\begin{proof}
Under the assumptions, $\rho(qm)=\rho_q(m)$ for all $m\in\mathcal{A}.$ Together with $qaq=qbq=0,$ we compute
\begin{align}
    \rho([\mathcal{L}_q(a),b])&=2\rho(q[[a^*,qq^*q],b])+\rho([q[qa^*q,q^*],b])
    \\&=-2\rho(q^*q)(\rho(qba^*)+\rho(qa^*b))+\rho((qa^*qq^*b)+\rho(qbq^*qa^*)
    \\&=-2\rho(q^*q)(\rho(a^*qb)+\rho(a^*bq))+\rho(qa^*)\rho(q^*bq)+\rho(qa^*)\rho(q^*qb)
    \\&=-2\rho(q^*q)\rho(a^*b)+\rho(qa^*)\rho(q^*b)\,.
\end{align}
On the other hand,
\begin{align}
 \rho([a,\mathcal{L}_q(b)])=2\rho(q^*q)\rho(b^*a)-\rho(qb^*)\rho(q^*a)\,.
\end{align}
Using the antisymmetry of $\Omega$ completes the proof.
\end{proof}
Due to the previous result, we can define an inner product on the tangent spaces that is compatible with $(*,\,\Omega,\,\mathcal{L})$:
\begin{definition}\label{metrich}
Let $\rho$ be a trace on $\mathcal{A}$ such that $\rho_{\mathds{1}(\mathcal{A})}=1.$ We define
\begin{equation}
    h:\textup{T}_q\mathds{1}(\mathcal{A})\otimes \textup{T}_q\mathds{1}(\mathcal{A})\to Z(\mathcal{A})\,,\;\;h_q(a,b)=2\rho(q^*q)\rho(ab^*)-\rho(q^*a)\rho(qb^*)\,.
\end{equation}
\end{definition}
By the proof of the previous result:
\begin{corollary}\label{corh}
\begin{equation}
h_q(a,b)=\Omega_q(a,\mathcal{L}_q(b))\,.
\end{equation}
\end{corollary}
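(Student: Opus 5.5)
The plan is to reduce the corollary to the computation already carried out in the proof of the preceding proposition, with the roles of $a$ and $b$ exchanged, and then use antisymmetry of $\Omega_q$ together with the trace property. Throughout, $\rho$ is a trace with $\rho\vert_{\mathds{1}(\mathcal{A})}=1$, so that $\rho(qm)=\rho_q(m)$ for all $m\in\mathcal{A}$. Since $a,b\in\textup{T}_q\mathds{1}(\mathcal{A})\subset \textup{T}_q\mathbb{I}(\mathcal{A})$, we also have $qaq=qbq=0$.

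First, I will record the output of that computation for a general pair of tangent vectors $a,b$:
\begin{equation*}
\Omega_q(\mathcal{L}_q(a),b)=\rho(q[\mathcal{L}_q(a),b])=-2\rho(q^*q)\rho(a^*b)+\rho(qa^*)\rho(q^*b)\,.
\end{equation*}
This is obtained by expanding $\mathcal{L}_q(a)=2[a^*,qq^*q]+[qa^*q,q^*]$ inside $\rho(q[\,\cdot\,,b])$. The expansion uses $qaq=qbq=0$ and the cyclicity of $\rho$. It also uses the rank–1 identity $qxq=\rho_q(x)q$ to pull out the central factors $\rho(q^*q)$ and $\rho(qa^*)$. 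Since $a,b$ are arbitrary here, I will apply this identity with $a$ and $b$ interchanged:
\begin{equation*}
\Omega_q(\mathcal{L}_q(b),a)=-2\rho(q^*q)\rho(b^*a)+\rho(qb^*)\rho(q^*a)\,.
\end{equation*}

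Second, since $\Omega_q$ is alternating, $\Omega_q(a,\mathcal{L}_q(b))=-\Omega_q(\mathcal{L}_q(b),a)$. This requires $\mathcal{L}_q(b)$ to be a tangent vector at $q$, which holds by \cref{mainnn}. Combining with the previous display gives
\begin{equation*}
\Omega_q(a,\mathcal{L}_q(b))=2\rho(q^*q)\rho(b^*a)-\rho(qb^*)\rho(q^*a)\,.
\end{equation*}

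Finally, I will compare this with \cref{metrich}. The trace property gives $\rho(b^*a)=\rho(ab^*)$. The factors $\rho(qb^*)$ and $\rho(q^*a)$ lie in $Z(\mathcal{A})$, which is commutative, so they can be reordered to $\rho(q^*a)\rho(qb^*)$. The resulting expression is exactly $h_q(a,b)$.

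The only real work lies in the first step, namely redoing the expansion carefully. I expect the main obstacle to be the bookkeeping of the terms $\rho(q\,qa^*q\,q^*b)$ and $\rho(q\,b\,q^*\,qa^*q)$, which come from the piece $[qa^*q,q^*]$ of $\mathcal{L}_q(a)$. For these I will first write $qa^*q=\rho(qa^*)q$ using rank one, and then apply $\rho(qm)=\rho_q(m)$ to evaluate what remains. Everything after that step is formal.
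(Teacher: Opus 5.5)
Your proposal is correct and matches the paper's argument: the paper also reuses the expansion of $\rho(q[\mathcal{L}_q(a),b])$ from the preceding proposition's proof. It then obtains $\rho(q[a,\mathcal{L}_q(b)])=2\rho(q^*q)\rho(b^*a)-\rho(qb^*)\rho(q^*a)$ by antisymmetry and identifies this with $h_q(a,b)$ via $\rho(b^*a)=\rho(ab^*)$ and commutativity of $Z(\mathcal{A})$. (Minor point: $\rho(q[x,y])=-\rho(q[y,x])$ holds for arbitrary $x,y\in\mathcal{A}$, so that step does not actually need $\mathcal{L}_q(b)$ to be a tangent vector.)
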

\begin{proposition}
Suppose that $2\rho_q(q^*)$ is not  a zero divisor and that $(x,y)\mapsto \rho(xy)$ is non–degenerate, ie. if $x\in\mathcal{A}$ is such that $\rho(xy)=0$ for all $y\in\mathcal{A},$ then $x=0.$ Then $h_q$ is non–degenerate.
\end{proposition}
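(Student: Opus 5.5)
Using \cref{corh}, write $h_q(a,b)=\Omega_q(a,\mathcal{L}_q(b))$. The idea is to reduce non-degeneracy of $h_q$ to two facts: non-degeneracy of $\Omega_q$ on $\textup{T}_q\mathds{1}(\mathcal{A})$, and the invertibility of $\mathcal{L}_q$ up to the scalar $-4r^3$ coming from \cref{mainnn}. Here $r=\rho_q(q^*)$, because $qq^*q=\rho_q(q^*)q$.

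\textbf{Step 1: reduce to $\Omega_q$.} Suppose $h_q(a,b)=0$ for all $b\in\textup{T}_q\mathds{1}(\mathcal{A})$; the case of the other slot is symmetric, using $h_q(a,b)=\Omega_q(\mathcal{L}_q(a),b)^*$ from the previous proposition. As $b$ ranges over the tangent space, so does $c=\mathcal{L}_q(b)$, since \cref{mainnn} says $\mathcal{L}_q(b)$ is again tangent. Hence $\Omega_q(a,c)=0$ for every $c$ in the image of $\mathcal{L}_q$. Given an arbitrary tangent vector $c$, put $b=\mathcal{L}_q(c)$. Then $\mathcal{L}_q(b)=-4r^3c$, so $-4r^3\,\Omega_q(a,c)=0$. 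Since $2r$ is not a zero divisor, neither is $4r^3$, and therefore $\Omega_q(a,c)=0$ for all tangent $c$.

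\textbf{Step 2: non-degeneracy of $\Omega_q$.} We have $\Omega_q(a,c)=\rho(q[a,c])=\rho([q,a]c)$ by cyclicity. I take $c=[q,x]$ for arbitrary $x\in\mathcal{A}$, which is tangent by \cref{vectoridentity} and the subsequent proposition. Cyclicity again gives $\rho([q,a][q,x])=\rho([[q,a],q]\,x)$. The trace pairing is non-degenerate, so $[[q,a],q]=0$. Now expand using $qa+aq=a$ and $qaq=0$: this gives $[q,a]=qa-aq$, and then $[[q,a],q]=qaq-aq-qa+qaq=-(qa+aq)=-a$. Hence $a=0$.

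The main point needing care is the first step. One has to check that the anti-linearity $\mathcal{L}_q(sa)=s^*\mathcal{L}_q(a)$ does not interfere, but it does not, since only the scalar identity $\mathcal{L}_q^2=-4r^3$ is used there. The non-zero-divisor hypothesis is used exactly there to cancel $4r^3$. Everything else is the routine commutator expansion of Step 2.
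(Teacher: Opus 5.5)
Your proof is correct and is essentially the paper's argument: the paper computes $h_q(a,\mathcal{L}_q([q,x]))=-4\rho_q(q^*)^3\rho(q[a,[q,x]])=4\rho_q(q^*)^3\rho(ax)$ in one line, which is your Step 1 (via $\mathcal{L}_q^2=-4r^3$) specialized to $c=[q,x]$, combined with your Step 2 identity $\rho(q[a,[q,x]])=\rho([[q,a],q]x)=-\rho(ax)$. One harmless slip: combining $h_q(a,b)=\Omega_q(a,\mathcal{L}_q(b))$ with antisymmetry of $\Omega_q$ and the previous proposition gives $h_q(a,b)=-\Omega_q(\mathcal{L}_q(a),b)^*$ rather than $+$; the sign does not matter, and the other slot also follows directly from the Hermitian symmetry $h_q(b,a)=h_q(a,b)^*$.
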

\begin{proof}
Let $x\in \mathcal{A}.$ Then $[q,x]\in\textup{T}_q\mathds{1} (\mathcal{A})$ and we compute
\begin{align}
h_q(a,\mathcal{L}_q([q,x]))&=-4\rho_q(q^*)^3\rho(q[a,[q,x])=4\rho_q(q^*)^3\rho(qaxq+qxaq)=4\rho_q(q^*)^3\rho(ax)\,,
\end{align}
and this implies the result.
\end{proof}
Assuming non–degeneracy of $\rho,$ which is true for matrix algebras, the only thing preventing the triple $(h,\Omega,\mathcal{L})$ from being hyperk\"{a}hler–like is a lack of integrability of $\mathcal{L},$ which is related to the fact that $\mathcal{L}^2$ is non–constant. However, we can normalize it on a cover, as we do in the next section.
\section{Hyperk\"{a}hler–Like Geometry of the Covering Space of Projections}
Finally, we will explain the hyperk\"{a}hler–like structure associated with a $^*$–ring. Under some mild conditions that are always satisfied for a matrix $^*$–ring over a commutative ring with involution, the rank–1 projections are naturally equipped with a triple $(\tilde{h},\Omega,I)$ such that $h$ defines a Hermitian inner product on the tangent spaces,\footnote{By which we mean, a non–degenerate bilinear form such that $\langle b,a\rangle=\langle a,b\rangle^*.$} $\Omega$ is a closed non–degenerate 2–form, and $I$ is an integrable almost complex structure\footnote{By which we mean, $I^2=-1.$} such that for all $r\in Z(\mathcal{A}),\,Ir=r^*I.$ They satisfy $\tilde{h}=\Omega\circ I.$
First, we define some covering spaces:
\begin{definition}
Let $Z(\mathcal{A})_{\textup{sa}}$ denote the self–adjoint elements of $Z(\mathcal{A}).$ We define 
 \begin{align}
     &\tilde{\mathds{1}}(\mathcal{A}):=\{(q,r)\in\mathds{1}(\mathcal{A})\times Z(\mathcal{A})_{\textup{sa}}:r^2\rho_q(q^*)=1\}\,.
 \end{align}
\end{definition}
We will identify this space with a space of the following form:
\begin{definition}
Let $(\mathcal{A},*)$ be a $^*$–ring and let $\dagger$ be an involution on $\mathcal{A}$ that commutes with $*$ and satisfies $(xy)^\dagger=y^\dagger x^\dagger.$ Let $Z(\mathcal{A})_{\textup{sa}}$ denote the elements of $Z(\mathcal{A})$ that are fixed by both $*,\dagger.$ We define
\begin{equation}
 \tilde{\mathbb{P}}(\mathcal{A})=\{(p,r)\in\mathbb{P}(\mathcal{A})\times Z(\mathcal{A})_{\textup{sa}}:r^2\rho_p(p^\dagger)=1\}\,.   
\end{equation}
\end{definition}
If $2\in\mathcal{A}^{\times}$ then we can define an almost complex structure on $\tilde{\mathds{1}}(\mathcal{A}),$ due to the following:
\begin{proposition}
Suppose $2\in \mathcal{A}^\times.$ Then the projection onto the first factor, $\tilde{\mathds{1}}\to\mathds{1},$ is \'{e}tale.
\end{proposition}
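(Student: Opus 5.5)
We need to show: given $(q,r)\in\tilde{\mathds{1}}(\mathcal{A})$ over a base $(\mathcal{S},*)$, every tangent vector $a\in\textup{T}_q\mathds{1}(\mathcal{A})$ lifts uniquely to a tangent vector $(a,s)$ of $\tilde{\mathds{1}}(\mathcal{A})$. Concretely, a tangent vector at $(q,r)$ is a pair $(q+\varepsilon a,\,r+\varepsilon s)$ with $q+\varepsilon a\in\mathds{1}(\mathcal{A}[\varepsilon]/(\varepsilon^2))$, $s\in Z(\mathcal{A})_{\textup{sa}}$, and
\begin{equation*}
(r+\varepsilon s)^2\,\rho_{q+\varepsilon a}\big((q+\varepsilon a)^*\big)=1\,.
\end{equation*}
So the plan is to expand this constraint to first order in $\varepsilon$ and solve for $s$.

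\emph{Step 1: first-order variation of $\rho_q(q^*)$.} Write $\rho_{q+\varepsilon a}((q+\varepsilon a)^*)=\rho_q(q^*)+\varepsilon\,\delta$ with $\delta\in Z(\mathcal{A})$. I will compute $\delta$ using the formula from the proof that $\textup{T}_q\mathbb{I}\subset \textup{T}_q\mathds{1}$. There we found, for $x$ independent of $\varepsilon$,
\begin{equation*}
\rho_{q+\varepsilon a}(x)=\rho_q(x)+\varepsilon\big(\rho_q(ax)+\rho_q(xa)\big)\,.
\end{equation*}
Taking $x=q^*+\varepsilon a^*$ then gives
\begin{equation*}
\delta=\rho_q(aq^*)+\rho_q(q^*a)+\rho_q(a^*)\,.
\end{equation*}
Two points need checking: that $\delta$ is self-adjoint, and that $\delta$ is central. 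Centrality is automatic, since $\rho$ takes values in $Z(\mathcal{A})$. For self-adjointness, I expect $\rho_q(x^*)$ to be related to $\rho_q(x)^*$ only via $\rho_{q^*}$, so I will instead argue from the structure of the equation. Because $r$ is self-adjoint and $r^2\rho_q(q^*)=1$, the element $\rho_q(q^*)=r^{-2}$ is self-adjoint. Applying $*$ to the defining identity $q\,x\,q=\rho_q(x)q$ shows that the map $x\mapsto\rho_q(x)$ is compatible with $*$ along the whole deformation. Since $\rho_{q_\varepsilon}(q_\varepsilon^*)$ is self-adjoint for every idempotent family $q_\varepsilon$ (the same argument applied over $\mathcal{A}[\varepsilon]$), its $\varepsilon$-coefficient $\delta$ is self-adjoint.

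\emph{Step 2: solve for $s$.} At first order the constraint reads
\begin{equation*}
2rs\,\rho_q(q^*)+r^2\delta=0\,.
\end{equation*}
Multiplying by $r$ and using $r^2\rho_q(q^*)=1$ gives $2s=-r^3\delta$. Since $2$ is invertible,
\begin{equation*}
s=-\tfrac12 r^3\delta\,.
\end{equation*}
This $s$ exists, is unique, and is central. It is self-adjoint because $r$ and $\delta$ are self-adjoint and central, so $*$ commutes with their product. Conversely, any $(a,s)$ satisfying the linearized equation is a tangent vector of $\tilde{\mathds{1}}$. Hence the map $\textup{T}_{(q,r)}\tilde{\mathds{1}}\to\textup{T}_q\mathds{1}$ is a bijection, which is exactly the definition of étale.

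\emph{Main obstacle.} The substantive points are the self-adjointness of $\delta$ in Step 1 and the correct expansion of $\rho_{q+\varepsilon a}$, which is where the rank-1 property (via the earlier proposition on tangent vectors) is used. Invertibility of $r$ is automatic from $r^2\rho_q(q^*)=1$, so it causes no difficulty. If the $*$-compatibility argument for $\rho$ fails in general, I would fall back on a weaker route: the linearized equation $2rs\rho_q(q^*)+r^2\delta=0$ involves only central elements, and applying $*$ to the constraint itself shows the solution $s$ automatically satisfies $s=s^*$ by uniqueness.
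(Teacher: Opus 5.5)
Your core argument is the paper's proof. You linearize $(r+\varepsilon s)^2\rho_{q+\varepsilon a}\big((q+\varepsilon a)^*\big)=1$ to get $2r^{-1}s+r^2\delta=0$ with $\delta=\rho_q(a^*)+\rho_q(aq^*)+\rho_q(q^*a)$, and then use $2\in\mathcal{A}^\times$ to obtain the unique $s=-\tfrac{r^3}{2}\delta$. The paper stops there, treating $s$ as an arbitrary element of $Z(\mathcal{A})$. You rightly notice that $r+\varepsilon s$ must be self-adjoint in $Z(\mathcal{A})[\varepsilon]/(\varepsilon^2)$. So surjectivity of the tangent map also needs $s^*=s$, which here means $\delta^*=\delta$. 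The paper silently skips this point.

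Your justification for $\delta^*=\delta$ does not work, however, and the failures are concrete.
\begin{itemize}
\item Applying $*$ to $qxq=\rho_q(x)q$ gives $\rho_q(x)^*=\rho_{q^*}(x^*)$. This relates $\rho_q$ to $\rho_{q^*}$, not to itself. So you only learn $\delta^*=\rho_{q^*}(a)+\rho_{q^*}(qa^*)+\rho_{q^*}(a^*q)$, which is not visibly $\delta$.
\item ``The same argument over $\mathcal{A}[\varepsilon]/(\varepsilon^2)$'' would derive self-adjointness of $R:=\rho_{q_\varepsilon}(q_\varepsilon^*)=\rho_q(q^*)+\varepsilon\delta$, where $q_\varepsilon:=q+\varepsilon a$, from $(r+\varepsilon s)^2R=1$ with $r+\varepsilon s$ self-adjoint. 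But that is exactly what you are trying to prove, so the argument is circular.
\item Your fallback has the same defect. Conjugating the linearized equation gives $2r^{-1}s^*+r^2\delta^*=0$, and uniqueness yields $s^*=s$ only once $\delta^*=\delta$ is already known.
\end{itemize}

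The missing ingredient is Proposition~\ref{21}, applied in $\mathcal{A}[\varepsilon]/(\varepsilon^2)$ to $q_\varepsilon$. This element is a rank-1 projection there by the very definition of $a\in\textup{T}_q\mathds{1}(\mathcal{A})$, and it satisfies $q_\varepsilon q_\varepsilon^*q_\varepsilon=Rq_\varepsilon$. The proposition then gives $R^2q_\varepsilon=RR^*q_\varepsilon$. By injectivity of $Z(\mathcal{A})[\varepsilon]/(\varepsilon^2)\to q_\varepsilon\big(\mathcal{A}[\varepsilon]/(\varepsilon^2)\big)q_\varepsilon$, this yields $R(R-R^*)=0$.

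Now $\rho_q(q^*)=r^{-2}$ is self-adjoint, so $R-R^*=\varepsilon(\delta-\delta^*)$. The $\varepsilon$-coefficient of $R(R-R^*)=0$ therefore reads $r^{-2}(\delta-\delta^*)=0$. Invertibility of $r$ gives $\delta=\delta^*$, hence $s^*=s$. With this step inserted, your proof is complete, and it closes a small gap that the paper's own proof leaves open.
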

\begin{proof}
For any such $^*$–ring $(\mathcal{A},*)$ and $(q,r)\in\tilde{\mathds{1}}(\mathcal{A}),$
we have that
\begin{align}
&\textup{T}_{(q,r)}\tilde{\mathds{1}}(\mathcal{A})\cong\{(a,s)\in \textup{T}_q\mathds{1}(\mathcal{A})\times Z(\mathcal{A}):2r^{-1}s+r^2\big(\rho_q(a^*)+\rho_q(aq^*)+\rho_q(q^*a)\big)=0\}\,.
\end{align}
The result now follows from the fact that for any $a\in\textup{T}_q\mathds{1}(\mathcal{A})$ there is a unique $s$ such that $(a,s)$ is in the set on the right, ie. 
\begin{equation}
s=-\frac{r^3}{2}\big(\rho_q(a^*)+\rho_q(aq^*)+\rho_q(q^*a)\big)\,.
\end{equation}
\end{proof}
\begin{definition}
With respect to the identification $\textup{T}_{(q,r)}\tilde{\mathds{1}}(\mathcal{A})\cong \textup{T}_{q}\mathds{1}(\mathcal{A}),$ for all $(q,r)\in\tilde{\mathds{1}}(\mathcal{A})$ let     
\begin{equation}
I_{(q,r)}:\textup{T}_{(q,r)}\tilde{\mathds{1}}(\mathcal{A})\to \textup{T}_{(q,r)}\tilde{\mathds{1}}(\mathcal{A})\,,\;\;
I_{(q,r)}(a):=\frac{r^3}{2}\mathcal{L}_q(a)\,.
\end{equation}
\end{definition}
We can rewrite this as 
\begin{equation}
    I_{(q,r)}(a)=r[a^*,q]+\frac{r^3\rho_q(a^*)}{2}[q,q^*]\,.
\end{equation}
\begin{corollary}
For all $(q,r)\in\tilde{\mathds{1}}(\mathcal{A})$ and $a\in \textup{T}_{(q,r)}\tilde{\mathds{1}}(\mathcal{A})$ we have that $I^2_{(q,r)}(a)  =-a.$
\end{corollary}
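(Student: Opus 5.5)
We deduce the corollary directly from \cref{mainnn}, applied at $q$ with $\rho_q(q^*)$ playing the role of $r$ there. Since $q$ is a rank–1 projection, $qq^*q=\rho_q(q^*)q$ with $\rho_q(q^*)\in Z(\mathcal{A})$. So the hypothesis of the definition of $\mathcal{L}_q$ holds, and \cref{themainl} reads
\begin{equation*}
\mathcal{L}_q^2(a)=-4\rho_q(q^*)^3a\,.
\end{equation*}
We also use the anti-linearity \cref{anticomp}: $\mathcal{L}_q(sa)=s^*\mathcal{L}_q(a)$ for $s\in Z(\mathcal{A})$. The first claim of \cref{mainnn} says $\mathcal{L}_q(a)\in\textup{T}_q\mathbb{I}(\mathcal{A})$, hence $\mathcal{L}_q(a)\in\textup{T}_q\mathds{1}(\mathcal{A})$ by the connected-component proposition. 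Under the étale identification $\textup{T}_{(q,r)}\tilde{\mathds{1}}(\mathcal{A})\cong\textup{T}_q\mathds{1}(\mathcal{A})$, this makes $I_{(q,r)}$ well defined as a map on the tangent space, so $I^2_{(q,r)}$ makes sense.

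Next we compute $I^2$. By definition $I_{(q,r)}(a)=\tfrac{r^3}{2}\mathcal{L}_q(a)$; here $\tfrac12$ makes sense because the étale proposition assumes $2\in\mathcal{A}^\times$, and it is central and self-adjoint. Applying $I$ twice and pulling the central scalar $\tfrac{r^3}{2}$ out of the inner $\mathcal{L}_q$ via \cref{anticomp} gives
\begin{equation*}
I^2_{(q,r)}(a)=\frac{r^3}{2}\Big(\frac{r^3}{2}\Big)^{*}\mathcal{L}_q^2(a)\,.
\end{equation*}
Since $r$ is self-adjoint and $2^*=2$ (as $*$ is additive and fixes $1$, it fixes $2$ and hence $2^{-1}$), this is $\tfrac{r^6}{4}\mathcal{L}_q^2(a)$. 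By \cref{themainl} it equals
\begin{equation*}
\frac{r^6}{4}\cdot\big(-4\rho_q(q^*)^3a\big)=-\big(r^2\rho_q(q^*)\big)^3a=-a\,,
\end{equation*}
using commutativity of the center and the defining relation $r^2\rho_q(q^*)=1$ of $\tilde{\mathds{1}}(\mathcal{A})$.

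The only delicate point is the conjugation step: the scalar $\tfrac{r^3}{2}$ gets conjugated when pulled through $\mathcal{L}_q$. This works because $r\in Z(\mathcal{A})_{\textup{sa}}$, which is exactly why the cover imposes self-adjointness of $r$. Everything else is the main identity of \cref{mainnn}, which we take as given.
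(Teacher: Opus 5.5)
Your proof is correct and is the argument the paper intends. The paper states this corollary without proof, as an immediate consequence of \cref{mainnn}: combine $\mathcal{L}_q^2(a)=-4\rho_q(q^*)^3a$ with the antilinearity $\mathcal{L}_q(sa)=s^*\mathcal{L}_q(a)$, the self-adjointness of $r$ and $\tfrac12$, and the defining relation $r^2\rho_q(q^*)=1$ of the cover, which is exactly your computation.
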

Furthermore, we can normalize the $h$ of \cref{metrich} on this cover:
\begin{definition}\label{metricht}
Let $\rho$ be a trace on $\mathcal{A}$ such that $\rho_{\mathds{1}(\mathcal{A})}=1$ and assume that $2\in\mathcal{A}^\times.$ We define
\begin{equation}
    \tilde{h}:\textup{T}_{(q,r)}\tilde{\mathds{1}}(\mathcal{A})\otimes \textup{T}_{(q,r)}\tilde{\mathds{1}}(\mathcal{A})\to Z(\mathcal{A})\,,\;\;\tilde{h}_{(q,r)}(a,b)=r\rho(ab^*)-\frac{r^3}{2}\rho(q^*a)\rho(qb^*)\,.
\end{equation}
\end{definition}
It follows from \cref{corh} that
\begin{equation}
\tilde{h}_q(a,b)=\Omega_q(a,I_{(q,r)}(b))\,.
\end{equation}
Furthermore, $\tilde{\mathds{1}}(\mathcal{A})$ has an involution, given by $(q,r)^*=(q^*,r).$ Related to this is the following:
\begin{lemma}
Let $\mathcal{A}$ be a $^*$–ring and assume that $2\in\mathcal{A}^\times$ and let $x^*=-x.$ Then the map 
\begin{equation}\label{rootinv}
    \tilde{\mathds{1}}(\mathcal{A}[x]/(x^2+1))\to \tilde{\mathds{1}}(\mathcal{A}[x]/(x^2+1))\,,\;\;(w,r)\mapsto\Big(\frac{w+w^*}{2}+x\frac{r[w,w^*]}{2},r\Big)
\end{equation}
is well–defined and its square is given by $(w,r)\mapsto (w^*,r).$
\end{lemma}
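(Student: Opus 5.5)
The plan is to write $w'$ for the first component of the image, and to prove well-definedness and the formula for the square by direct algebra. Set
\[
p=\tfrac{w+w^*}{2},\qquad k=\tfrac{r}{2}[w,w^*],\qquad w'=p+xk .
\]
Throughout, $x$ is central with $x^2=-1$ and $x^*=-x$. Write $\rho=\rho_w(w^*)$. The only identities used are $w^2=w$, $ww^*w=\rho w$, $w^*ww^*=\rho^* w^*$ and $r^2\rho=1$. Since $r$ is self-adjoint and central, the last identity gives $\rho^*=\rho$. We also have $[w,w^*]^*=[w,w^*]$, so $k^*=k$ and $w'^*=p-xk$.

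\emph{Step 1: the square.} Since $[p+xk,\,p-xk]=-2x[p,k]$, the map applied twice gives
\[
p+x\tfrac{r}{2}(-2x[p,k])=p+r[p,k].
\]
A short expansion with the identities above yields
\[
[w,[w,w^*]]=ww^*-2ww^*w+w^*w,\qquad [w^*,[w,w^*]]=2w^*ww^*-ww^*-w^*w .
\]
Adding these gives $[w+w^*,[w,w^*]]=2\rho(w^*-w)$. Hence
\[
r[p,k]=\tfrac{r^2}{4}\cdot 2\rho(w^*-w)=\tfrac{w^*-w}{2},
\]
so $p+r[p,k]=w^*$. The second component $r$ is unchanged, so the square is $(w,r)\mapsto(w^*,r)$, as claimed.

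\emph{Step 2: $w'$ is idempotent.} We have $w'^2=p^2-k^2+x(pk+kp)$. Using $w^2=w$ and $w^{*2}=w^*$ one computes
\[
[w,w^*]^2=\rho(ww^*+w^*w)-\rho(w+w^*),
\]
so that $k^2=\tfrac14(ww^*+w^*w-w-w^*)$. Combined with $p^2=\tfrac14(w+ww^*+w^*w+w^*)$ this gives $p^2-k^2=p$. Next, $\{w+w^*,[w,w^*]\}=2[w,w^*]$, which gives $pk+kp=k$. Therefore $w'^2=w'$.

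\emph{Step 3: rank-1 and normalization (the main obstacle).} To show $w'$ is rank-1, I would compute $w'ww'$ and $ww'w$ and aim to prove
\[
w'ww'=\mu w',\qquad ww'w=\nu w
\]
with $\mu,\nu\in Z$ units. I expect something like $\mu,\nu$ involving $\tfrac12(1\pm x r\rho)$, possibly up to a factor; the verification is a mechanical but longer expansion with the same identities. Granting these two identities, for any $y$,
\[
w'yw'=\mu^{-2}w'w(w'yw')ww'=\mu^{-2}\rho_w(w'yw')\,w'ww'\in Z\,w',
\]
which gives surjectivity of $z\mapsto zw'$ onto $w'\mathcal{A}w'$. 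For injectivity, if $zw'=0$ then $zw=\nu^{-1}z\,ww'w=0$, and so $z=0$ because $w$ is rank-1. Finally, I would check $r^2\rho_{w'}(w'^*)=1$ by computing $w'w'^*w'$ in the same way. Here $w'w'^*w'=(p+xk)(p-xk)(p+xk)$ reduces via Step 2 and the identities $p^2-k^2=p$ and $pk+kp=k$; the expectation is that this reduces to $\rho\, w'$, so that $\rho_{w'}(w'^*)=\rho$ and the constraint with the same $r$ holds. This last computation, together with finding the correct units $\mu,\nu$, is where I expect the real work to lie.
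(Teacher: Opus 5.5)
\textbf{Verdict: there is a genuine gap.} Your Steps 1 and 2 are correct. The identities $[w+w^*,[w,w^*]]=2\rho(w^*-w)$, $p^2-k^2=p$ and $pk+kp=k$ all check out. But the substance of well-definedness is that $w'$ is rank-1 and that $r^2\rho_{w'}(w'^*)=1$, and you leave this as an expectation. Worse, the route you propose for rank-1 cannot work in the stated generality. Carrying out the expansion you postpone gives
\[
ww'w=\tfrac{1+\rho}{2}\,w,\qquad w'ww'=\tfrac{1+\rho}{2}\,w',
\]
so $\mu=\nu=\tfrac{1+\rho}{2}$, and nothing in the hypotheses makes $1+\rho$ a unit.

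For a concrete failure, take $\mathcal{A}=M_2(\mathbb{C})$ with $*$ the complex-linear transpose. Via $x\mapsto(i,-i)$ one has $\mathcal{B}=\mathcal{A}[x]/(x^2+1)\cong M_2(\mathbb{C})\times M_2(\mathbb{C})$ with $(X,Y)^*=(Y^T,X^T)$. For $w=(e_{11},\,(e_1+e_2)(-e_1+2e_2)^T)$ one gets $ww^*w=-w$, so $\rho=-1$. Then $r=i\cdot 1$ is central, self-adjoint and satisfies $r^2\rho=1$. Here $ww'w=0$ and $w'ww'=0$, so both your injectivity and surjectivity arguments collapse, even though $w'$ is in fact rank-1 in this example.

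The missing idea is a factorization showing that $w'$ is Murray--von Neumann equivalent to $w$. One checks directly that
\[
w'=\tfrac12(1-xrw^*)\,w\,(1+xrw^*).
\]
Put $a=\tfrac12(1-xrw^*)w$ and $b=w(1+xrw^*)$. Then $ab=w'$. Since $(1+xrw^*)(1-xrw^*)=1+r^2w^*$, also $ba=\tfrac12(w+r^2\rho w)=w$. Consequences:
\begin{itemize}
\item Idempotency is immediate: $w'^2=a(ba)b=awb=ab=w'$.
\item Rank-1 transfers from $w$ to $w'$. Using $a=w'aw$ and $b=wbw'$, the maps $y\mapsto bya$ and $z\mapsto azb$ are mutually inverse $Z(\mathcal{B})$-algebra isomorphisms between $w'\mathcal{B}w'$ and $w\mathcal{B}w$. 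They send $zw'\mapsto zw$, so bijectivity of $z\mapsto zw$ gives bijectivity of $z\mapsto zw'$.
\item Normalization follows in two lines. Write $w'^*=b^*a^*$ and compute $bb^*=2ww^*$ and $a^*a=\tfrac12 w^*w$. Then $w'w'^*w'=a(bb^*)(a^*a)b=\rho\,awb=\rho\,w'$. Hence $\rho_{w'}(w'^*)=\rho$ and $r^2\rho_{w'}(w'^*)=1$.
\end{itemize}
With these in place, your Step 1 computation completes the proof that the square is $(w,r)\mapsto(w^*,r)$.
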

We have the following complex embedding result, which shows that $(\tilde{\mathds{1}}(\mathcal{A}),I)$ is analogous to a complex affine manifold.
\begin{corollary}\label{bic}
Let $(\mathcal{A},*)$ be a $^*$–ring and assume that $2\in\mathcal{A}^\times.$ Consider the $^*$–ring $\mathcal{A}[x]/(x^2+1),$ with involution given by $(a+xb)^*=a^*+xb^*.$ Let $\dagger$ be the involution $(a+xb)^\dagger=a^*-xb^*.$ The map of \ref{rootinv} restricts to a bijection  
\begin{equation}
    \tilde{\mathds{1}}(\mathcal{A})\to\tilde{\mathbb{P}}(\mathcal{A}[x]/(x^2+1)])
\end{equation}
and its formal derivative intertwines $I$ and $x.$\footnote{The formal derivative is the map obtained by replacing $\mathcal{A}$ with $\mathcal{A}[\varepsilon]/(\varepsilon^2).$}
\end{corollary}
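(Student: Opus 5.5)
The plan is to verify directly that the map $\Phi(q,r)=(p,r)$, with
\begin{equation*}
p:=\frac{q+q^*}{2}+x\,\frac{r[q,q^*]}{2}\,,
\end{equation*}
lands in $\tilde{\mathbb{P}}(\mathcal{A}[x]/(x^2+1))$. Next I would construct an explicit inverse. Finally I would differentiate $\Phi$ by replacing $\mathcal{A}$ with $\mathcal{A}[\varepsilon]/(\varepsilon^2)$. Throughout I write $\rho:=\rho_q(q^*)$, so that $qq^*q=\rho q$, $q^*qq^*=\rho^* q^*=\rho q^*$ (here $\rho$ is self-adjoint because $r^2\rho=1$ with $r$ self-adjoint), and $r^2\rho=1$.

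\textbf{Step 1 (idempotent and $\dagger$-self-adjoint).} Expanding $p^2$ with $x^2=-1$ and central $r$, the $x$-free part is $\tfrac14(q+q^*)^2-\tfrac{r^2}{4}[q,q^*]^2$. Using the rank-1 relations one finds $(q+q^*)^2=q+q^*+qq^*+q^*q$ and $[q,q^*]^2=\rho\,(qq^*+q^*q-q-q^*)$. With $r^2\rho=1$ the $x$-free part therefore collapses to $\tfrac12(q+q^*)$. The $x$-part is $\tfrac r4\{q+q^*,[q,q^*]\}$, and I expect it to reduce to $\tfrac r2[q,q^*]$ by the same relations. Self-adjointness $p^\dagger=p$ follows because $\dagger$ fixes $q+q^*$, sends $x\mapsto -x$, and $[q,q^*]^*=-[q,q^*]$.

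\textbf{Step 2 (rank one and normalization).} I would show $p\,\mathcal{A}[x]\,p=Z\cdot p$, with $Z$ the center of $\mathcal{A}[x]/(x^2+1)$. The idea is to write $p$ as a combination $\alpha q+\beta q^*+\gamma qq^*+\delta q^*q$ with coefficients in $Z(\mathcal{A})[x]$ and use $qyq=\rho_q(y)q$ together with its $*$-conjugate. From this I would read off $\rho_p$. I would then check the covering condition $r^2\rho_p(p^{\cdot})=1$, where $p^{\cdot}$ is the involution appearing in the definition of $\tilde{\mathbb{P}}$, by evaluating $\rho_p$ with the same formula. Care is needed here about which involution enters $\rho_p$.

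\textbf{Step 3 (bijectivity).} Write $p=s+xt$. Then $s=\tfrac{q+q^*}{2}$ and $t=\tfrac r2[q,q^*]$. The goal is to recover $q$, i.e.\ $q-q^*$, from $(s,t,r)$. My first attempt is to compute $[s,t]$: it equals $\tfrac r4[q+q^*,[q,q^*]]$, which by the rank-1 relations should be a central multiple $r\rho$ (up to sign) of $q-q^*$. That would give $q=s+c\,r[s,t]$ for an explicit central $c$. For surjectivity I would run the argument backwards: given $(p,r)\in\tilde{\mathbb{P}}$, define $q$ by this formula and check $q^2=q$, rank one, and $r^2\rho_q(q^*)=1$, using $p^2=p$ and $p^\dagger=p$ split into $x$-parts. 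I expect this step to be the main obstacle, since the backward verification needs the relations satisfied by $(s,t)$ to reproduce the rank-1 relations for $q$.

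\textbf{Step 4 (intertwining).} Differentiate at $(q,r)$ in the direction $a$, using the induced $s=-\tfrac{r^3}{2}(\dots)$ from the étale proposition:
\begin{equation*}
d\Phi(a)=\frac{a+a^*}{2}+\frac{x}{2}\big(r([a,q^*]+[q,a^*])+s[q,q^*]\big)\,.
\end{equation*}
Then compare $d\Phi(I(a))$, with $I(a)=r[a^*,q]+\tfrac{r^3\rho_q(a^*)}{2}[q,q^*]$, against $x\,d\Phi(a)$. The $x$-free part of $x\,d\Phi(a)$ is minus the $x$-part of $d\Phi(a)$, and the $x$-part of $x\,d\Phi(a)$ is $\tfrac{a+a^*}{2}$. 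So it suffices to check two identities. First, that $\tfrac12(I(a)+I(a)^*)$ equals $-\tfrac12\big(r([a,q^*]+[q,a^*])+s[q,q^*]\big)$. Second, that the $x$-part of $d\Phi(I(a))$ equals $\tfrac{a+a^*}{2}$. The second identity should follow from $I^2=-1$ (the Corollary after the definition of $I$) applied to the first.
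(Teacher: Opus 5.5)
Your Step 4 is the paper's proof, and in fact the paper's proof consists only of that computation. It writes $d\Phi(a)=\tfrac{a+a^*}{2}-\tfrac{x}{2}\big(I(a)+I(a)^*\big)$, multiplies by $x$, and uses $I^2=-1$. The first identity you leave to check reduces to three facts:
\begin{itemize}
\item[(i)] $\rho_{q^*}(a)=\rho_q(aq^*)+\rho_q(q^*a)$. To see this, sandwich $q^*aq^*=\rho_{q^*}(a)q^*$ between two $q$'s, expand with $a=qa+aq$ and $qq^*q=\rho q$, and cancel the invertible $\rho$.
\item[(ii)] $\rho_q(a^*)^*=\rho_{q^*}(a)$.
\item[(iii)] $[q,q^*]^*=[q,q^*]$.
\end{itemize}

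Fact (iii) is where Step 1 goes wrong. Since $[y,z]^*=[z^*,y^*]$, the commutator $[q,q^*]$ is self-adjoint, not skew-adjoint. So your conclusion $p^\dagger=p$ is false: in fact $p^\dagger=\tfrac{q+q^*}{2}-x\tfrac{r[q,q^*]}{2}=\Phi(q^*,r)$. Membership in $\mathbb{P}(\mathcal{A}[x]/(x^2+1))$ requires $p^*=p$ for the involution that fixes $x$, and that does hold.

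Had $p^\dagger=p$ been true, you would get $\rho_p(p^\dagger)=\rho_p(p)=1$, and the covering condition would force $r^2=1$. The correct computation uses $s^2-t^2=s$ and $[s,t]=\tfrac{1}{2r}(q^*-q)$. It gives $p^\dagger p=\tfrac12(qq^*+q^*q)+\tfrac{x}{2r}(q^*-q)$ and then $pp^\dagger p=\rho_q(q^*)\,p$, so $r^2\rho_p(p^\dagger)=1$ once $p$ is known to be rank one. The same sign matters in Step 4: with a skew $[q,q^*]$, the $[q,q^*]$-coefficients in your first identity would not match.

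Steps 2 and 3 are only announced ("I expect", "should"). The rank-one property of $p$ in a general $^*$-ring is the one nontrivial point, and it is not done. The paper does not compute these steps; it takes them from the preceding Lemma. There, the map $\Psi$ of \eqref{rootinv} (with $x^\dagger=-x$) is well defined on $\tilde{\mathds{1}}(\mathcal{A}[x]/(x^2+1))$, and $\Psi^2$ is the adjoint $\dagger$. You can argue the same way:
\begin{itemize}
\item \emph{Well-definedness.} $\tilde{\mathds{1}}(\mathcal{A})$ sits inside $\tilde{\mathds{1}}(\mathcal{A}[x]/(x^2+1))$ and $\Phi=\Psi$ there. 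So the Lemma gives rank one and the normalization of $p$ for free, and $p^*=p$ puts $(p,r)$ in $\tilde{\mathbb{P}}$.
\item \emph{Surjectivity.} Take $(p,r)\in\tilde{\mathbb{P}}$ with $p=s+xt$. Then $\Psi(p,r)=(s+r[s,t],r)$. Its first entry lies in $\mathcal{A}$, and rank one descends by comparing $\mathcal{A}$- and $x\mathcal{A}$-components. Also $r$ is fixed by both involutions, so $r\in Z(\mathcal{A})_{\mathrm{sa}}$ because $2$ is invertible. Hence $\Psi(p,r)\in\tilde{\mathds{1}}(\mathcal{A})$. Since $\Phi(q^*,r)=\Phi(q,r)^\dagger$, the element $s-r[s,t]$ is a preimage of $p$.
\item \emph{Injectivity.} This follows from $\Psi^2=\dagger$.
\end{itemize}
This matches your proposed inverse with $c=-1$, because $[s,t]=\tfrac{r\rho}{2}(q^*-q)$ and $r^2\rho=1$.
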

\begin{proof}
We need to show that the formal derivative intertwines $I$ and $x.$ Its differential is given by
\begin{align}
a\mapsto &\frac{a+a^*}{2}+\frac{x}{2}\Big(r[a,q^*]-r[a^*,q]-r^3(qa^*qq^++qq^*aq^*)\Big)
\\&=\frac{a+a^*}{2}-\frac{x}{2}(I(a)+I(a)^*)\,.
\end{align}
Applying $x$ to the right side, we get 
\begin{equation}
\frac{1}{2}(I(a)+I(a)^*)+\frac{x}{2}(a+a^*)
\end{equation}
which is the result obtained by applying the differential to $I(a).$
\end{proof}
Therefore, $\tilde{\mathds{1}}(\mathcal{A})$ comes with the analogues of a closed 2–form, an integrable almost complex structure and a K\"{a}hler potential (for the skew–adjoint part of $\Omega$), given by $(q,r)\mapsto r^{-1}.$ 
\begin{corollary}
Under the assumptions of the previous corollary, if $Z(\mathcal{A})$ contains an element $i$ such that $i^2=-1,$ then we get a well–defined map
\begin{equation}
  \tilde{\mathds{1}}(\mathcal{A})\to  \tilde{\mathds{1}}(\mathcal{A})\,,\;\;\Big(\frac{w+w^*}{2}+i\frac{r[w,w^*]}{2},r\Big)
\end{equation}
whose formal derivative intertwines $I$ and $i.$
\end{corollary}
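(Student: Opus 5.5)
The plan is to obtain the statement as a specialization of \cref{bic}, by pushing its bijection forward along the ring homomorphism
\begin{equation}
\varphi:\mathcal{A}[x]/(x^2+1)\to\mathcal{A}\,,\;\;a+xb\mapsto a+ib\,.
\end{equation}
This is well defined because $i$ is central and $i^2=-1$. The map in the statement is then $\varphi$ applied to the first component of the map \ref{rootinv}, with $r$ left fixed. So two things need to be checked: that $(p,r)\mapsto(\varphi(p),r)$ sends $\tilde{\mathbb{P}}(\mathcal{A}[x]/(x^2+1))$ into $\tilde{\mathds{1}}(\mathcal{A})$, and that the formal derivatives behave well.

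\textbf{Step 1 (ring structure).} Since $2\in\mathcal{A}^\times$ and $i\in Z(\mathcal{A})$, the elements $e_\pm=(1\mp ix)/2$ are orthogonal central idempotents with $e_++e_-=1$. This gives a ring isomorphism $\mathcal{A}[x]/(x^2+1)\cong\mathcal{A}\times\mathcal{A}$ via $x\mapsto(i,-i)$, and $\varphi$ is the first projection. Centers of products are products of centers, and corner algebras split the same way. Hence an idempotent $p=(p_1,p_2)$ is rank--1 exactly when both $p_1$ and $p_2$ are rank--1 in $\mathcal{A}$, and in particular $\varphi(p)\in\mathds{1}(\mathcal{A})$. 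Also $\varphi(\rho_p(y))=\rho_{\varphi(p)}(\varphi(y))$, obtained by applying $\varphi$ to $pyp=\rho_p(y)p$.

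\textbf{Step 2 (involutions and the normalization).} Next I compare the involutions. If $i^*=-i$, then $\varphi(y^\dagger)=\varphi(y)^*$, where $(a+xb)^\dagger=a^*-xb^*$. Applying $\varphi$ to $r^2\rho_p(p^\dagger)=1$ then gives $r^2\rho_{\varphi(p)}(\varphi(p)^*)=1$. Moreover an element $r=r_0+xr_1$ of $Z$ fixed by both $*$ and $\dagger$ has $r_1=0$, so $r\in Z(\mathcal{A})_{\textup{sa}}$ and $\varphi(r)=r$. This establishes well-definedness when $i^*=-i$. If instead $i^*=i$, then $\varphi$ intertwines $*$ with $*$, not $\dagger$ with $*$. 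In that case I would use $\varphi'$, namely $x\mapsto -i$, on the $\dagger$-side, or more directly redo the normalization check on the explicit element $q'=\frac{w+w^*}{2}+i\frac{r[w,w^*]}{2}$ using the same computation that \cref{bic} uses for $\rho_p(p^\dagger)$. I expect this case distinction, i.e.\ matching the involution conventions on $i$ with those of $\dagger$, to be the main obstacle. Everything else is formal.

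\textbf{Step 3 (derivative).} The formal derivative of a composite is the composite of formal derivatives. The map $\varphi$ is additive and compatible with base change to $\mathcal{A}[\varepsilon]/(\varepsilon^2)$, so its derivative is $\varphi$ itself. By \cref{bic}, the derivative $D$ of \ref{rootinv} satisfies $D(I(a))=xD(a)$. Therefore
\begin{equation}
\varphi(D(I(a)))=\varphi(x)\varphi(D(a))=i\,\varphi(D(a))\,,
\end{equation}
which says exactly that the derivative of the composite intertwines $I$ and $i$. Finally, since $\varphi$ restricted to $\mathcal{A}\subset\mathcal{A}[x]/(x^2+1)$ is the identity, the composite is given on $\tilde{\mathds{1}}(\mathcal{A})$ by the stated formula, namely replace $x$ by $i$ in \ref{rootinv}.
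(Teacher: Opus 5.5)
\textbf{Verdict: genuine gap, but only in your treatment of $i^*\neq -i$; for $i^*=-i$ your proof is complete.}

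For $i^*=-i$ your argument is correct and is the route the paper intends. The statement is \cref{bic} pushed forward along $\varphi\colon x\mapsto i$. The identity $\varphi(y^\dagger)=\varphi(y)^*$ turns the normalization $r^2\rho_p(p^\dagger)=1$ into $r^2\rho_{q'}(q'^*)=1$. Applying $\varphi$ to $D\circ I=x\,D$ gives the intertwining with $i$. Only your Step 2 depends on $i^*$; Steps 1 and 3 use only that $i$ is central with $i^2=-1$ and that $2$ is invertible.

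The gap is your plan for $i^*\neq -i$. The fix you suggest for $i^*=i$ cannot work. In that case both $x\mapsto i$ and $x\mapsto -i$ commute with $*$, and neither carries $\dagger$ to $*$. More seriously, no argument can work, because the conclusion is false there:
\begin{itemize}
\item If $i^*=i$, then $q'=\frac{w+w^*}{2}+i\frac{r[w,w^*]}{2}$ is self-adjoint, since $[w,w^*]$ is self-adjoint and $r,i$ are central and self-adjoint.
\item Hence $\rho_{q'}(q'^*)=\rho_{q'}(q')=1$, and $(q',r)\in\tilde{\mathds{1}}(\mathcal{A})$ would force $r^2=1$.
\end{itemize}
Concretely, take $\mathcal{A}=\textup{M}_2(\mathbb{C})$ with $*$ the transpose, so $i^*=i$, and $w=\begin{pmatrix}1&1\\0&0\end{pmatrix}$. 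Then $\rho_w(w^*)=\textup{Tr}(ww^{T})=2$, so $(w,1/\sqrt{2})\in\tilde{\mathds{1}}(\mathcal{A})$. The image is
$q'=\begin{pmatrix}1+\frac{ir}{2}&\frac{1-ir}{2}\\ \frac{1-ir}{2}&-\frac{ir}{2}\end{pmatrix}$ with $r=1/\sqrt{2}$. It has trace $1$ and determinant $\frac{2r^2-1}{4}=0$, so it is a symmetric rank--1 idempotent. But $r^2\rho_{q'}(q'^*)=\tfrac12\neq 1$.

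Your two-case split is also not exhaustive. In general $i^*$ is only some square root of $-1$ in $Z(\mathcal{A})$ and need not be $\pm i$; this happens, for example, when $Z(\mathcal{A})$ is a product of two rings.

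So the ``main obstacle'' you flagged should not be resolved; it should be removed by adding the hypothesis $i^*=-i$. The paper leaves this hypothesis implicit. It holds in the motivating case $\textup{M}_n(\mathbb{C})$ with the conjugate transpose, which is the case used in the remark that follows the statement.
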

As a result, with respect to both of the almost complex structures $i,I$ on $\textup{T}^*\mathbb{CP}^{n-1},$ $q\mapsto \sqrt{\textup{Tr}(q^\dagger q)}$ is a K\"{a}hler potential for the same Riemannian metric.
\\\\Finally, we end this section with the following, which shows that $^*$ is anti–symplectic.
\begin{lemma}\label{pol}
$\Omega_{q^*}(a^*,b^*)=-\Omega_q(a,b)^*\,.$
\end{lemma}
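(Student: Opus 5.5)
Since $\Omega_q(a,b)=\rho(q[a,b])$, the identity $\Omega_{q^*}(a^*,b^*)=-\Omega_q(a,b)^*$ should follow from three properties: the involution reverses products, $\rho$ commutes with $*$, and $\rho$ is cyclic. First I check that the left side is defined. If $q\in\mathbb{I}(\mathcal{A})$, then $(q^*)^2=(q^2)^*=q^*$. If $aq+qa=a$, applying $*$ gives $q^*a^*+a^*q^*=a^*$. By \cref{vectoridentity}, $a^*,b^*\in\textup{T}_{q^*}\mathbb{I}(\mathcal{A})$. The same holds with $\mathds{1}$ in place of $\mathbb{I}$, because $*$ carries $q\mathcal{A}q$ bijectively onto $q^*\mathcal{A}q^*$ and restricts to an involution of $Z(\mathcal{A})$, so the rank--1 condition is preserved.

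The core computation runs as follows. Use $[a^*,b^*]=a^*b^*-b^*a^*=(ba)^*-(ab)^*=-[a,b]^*$. Then
\begin{equation*}
\Omega_{q^*}(a^*,b^*)=\rho\big(q^*[a^*,b^*]\big)=-\rho\big(q^*[a,b]^*\big)=-\rho\big(([a,b]q)^*\big)=-\rho\big([a,b]q\big)^*.
\end{equation*}
Finally, cyclicity gives $\rho([a,b]q)=\rho(q[a,b])=\Omega_q(a,b)$, which yields the claim.

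There is no real obstacle here. The only points that need care are the sign coming from the order reversal in $[a,b]^*$, and the fact that $\rho(x^*)=\rho(x)^*$ is part of the definition of a trace. If one wants the statement for $\omega$ defined via $\rho_q$ instead, the analogous route is to apply $*$ to $q[a,b]q=\rho_q([a,b])q$. This gives $q^*[a,b]^*q^*=\rho_q([a,b])^*q^*$, hence $\rho_{q^*}([a^*,b^*])=-\rho_q([a,b])^*$, and the same sign appears.
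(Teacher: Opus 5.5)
Your proof is correct. The paper states this lemma without proof, and your argument is the direct verification it relies on: $[a^*,b^*]=-[a,b]^*$, followed by $\rho(x^*)=\rho(x)^*$ and cyclicity of $\rho$. Your checks that $q^*$ is again a (rank-1) idempotent with $a^*,b^*\in\textup{T}_{q^*}\mathbb{I}(\mathcal{A})$, and your variant for $\omega$ via uniqueness of $\rho_{q^*}$, are also correct.
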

Related to this, $\mathds{1}(\mathcal{A}),\,\tilde{\mathds{1}}(\mathcal{A})$ are naturally para–K\"{a}hler manifolds, ie. they have a pair of transverse Lagrangian polarizations, which are related by $*.$ For the latter, these Lagrangian submanifolds are the fibers of the maps
\begin{equation}
    \tilde{\mathds{1}}(\mathcal{A})\to \mathbb{P}(\mathcal{A})\,,\;\;(q,r)\mapsto r^2qq^*\,,\;(q,r)\mapsto r^2q^*q\,.
\end{equation}
The corresponding splitting of the tangent bundle is given by $a=qa+aq.$ The corresponding involution of the tangent bundle is given by $a\mapsto[q,a].$
\section{The Scheme of Rank–1 Projections}
Considering example \ref{matrixex}, we give the following definition:
\begin{definition}
For a commutative ring $\mathcal{R},$ we let $\mathcal{R}\mathds{1}^{n-1}\subset M_n(\mathcal{R})$ denote the set of rank–1 projections. For a commutative ring with involution $(\mathcal{R},*),$ we let $\mathcal{R}_{*}\mathbb{P}^{n-1} \subset M_n(\mathcal{R})$ denote the set of self–adjoint rank–1 projections.
\end{definition}
We note that the image of an idempotent is automatically a projective module: $\mathcal{R}^n\cong \textup{im}(q)\oplus\textup{ker}(q).$ It follows that there is an embedding $\mathcal{R}_{*}\mathbb{P}^{n-1}\xhookrightarrow{}\mathcal{R}\mathbb{P}^{n-1},$ given by $q\mapsto \textup{im}(q).$ 
\begin{exmp}
When $\mathcal{R}$ is a field, the image of $\mathcal{R}_{*}\mathbb{P}^{n-1}\xhookrightarrow{}\mathcal{R}\mathbb{P}^{n-1}$ contains all rank–1 subspaces whose non–zero vectors satisfy $v^\dagger v\ne 0.$ In particular, letting $*$ be complex conjugation, $\mathbb{C}_*\mathbb{P}^n\cong \mathbb{CP}^n.$
\end{exmp}
\begin{exmp}
From \cref{bic}, we get a biholomorphism between $\textup{T}^*\mathbb{CP}^n\cong \mathbb{C}\mathbb{I}^{n}$ and bicomplex projective space, ie. $\mathbb{C}[x]/(x^2+1)_*\mathbb{P}^{n},$ where $(a+xb)^*=\bar{a}+x\bar{b}.$
\end{exmp}
\begin{proposition}
Using \cref{rhoboat}, $\rho_q(x)=\textup{Tr}(qx).$ In particular, $\Omega_q(a,b)=\textup{Tr}(q[a,b]).$
\end{proposition}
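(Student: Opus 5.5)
We need to show that for $q\in\mathcal{R}\mathds{1}^{n-1}$ (a rank–1 idempotent in $\textup{M}_n(\mathcal{R})$, $\mathcal{R}$ commutative), the scalar $\rho_q(x)$ of \cref{rhoboat} equals $\textup{Tr}(qx)$. The second claim then follows from $\Omega_q(a,b)=\rho(q[a,b])$ with $\rho=\textup{Tr}$. Indeed, $\textup{Tr}$ is a trace in the sense above: it is $\mathcal{R}$–linear and cyclic, and $\textup{Tr}(x^\dagger)=\textup{Tr}(x)^*$. We also have $Z(\textup{M}_n(\mathcal{R}))=\mathcal{R}$. By the earlier proposition on traces, $\textup{Tr}(qx)=\rho_q(x)\textup{Tr}(q)$. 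So the whole statement reduces to showing $\textup{Tr}(q)=1$ for every rank–1 projection $q$. Equivalently, $\textup{Tr}\vert_{\mathds{1}}=1$, which is also the hypothesis used in \cref{metrich}.

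The plan for $\textup{Tr}(q)=1$ is to work locally on $\textup{Spec}\,\mathcal{R}$. Set $P=\textup{im}(q)$. It is a finitely generated projective summand of $\mathcal{R}^n$, since $\mathcal{R}^n=\textup{im}(q)\oplus\ker(q)$. The rank–1 condition $\mathcal{R}\xrightarrow{\sim} q\textup{M}_n(\mathcal{R})q$ identifies $\mathcal{R}\cong\textup{End}_{\mathcal{R}}(P)$. The trace of $q$ on $\mathcal{R}^n$ is the trace of $\textup{id}_P$ plus the trace of $0$ on $\ker q$, so $\textup{Tr}(q)$ is the rank function of $P$, viewed as an element of $\mathcal{R}$.

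To compute it, localize at any prime $\mathfrak{p}$. Then $P_\mathfrak{p}\cong\mathcal{R}_\mathfrak{p}^k$ is free, and $q$ is conjugate to $\textup{diag}(1,\dots,1,0,\dots,0)$ with $k$ ones, so $\textup{Tr}(q)_\mathfrak{p}=k$. Both taking corners and forming endomorphism rings of finitely presented modules commute with localization. Hence $\textup{End}(P_\mathfrak{p})\cong\textup{M}_k(\mathcal{R}_\mathfrak{p})$ is isomorphic to $\mathcal{R}_\mathfrak{p}$ via scalars, which forces $k=1$. This assumes $P_\mathfrak{p}\neq 0$: if $P_\mathfrak{p}=0$, then the map $\mathcal{R}_\mathfrak{p}\to 0$ would have to be injective, impossible for a nonzero local ring. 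Thus $\textup{Tr}(q)-1$ vanishes in every localization, so $\textup{Tr}(q)=1$.

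With this in hand, $\textup{Tr}(qx)=\rho_q(x)$, and $\Omega_q(a,b)=\textup{Tr}(q[a,b])=\rho_q([a,b])=\omega_q(a,b)$. The main obstacle is the localization step, namely checking that the rank–1 condition, stated as a bijection of rings, passes to $\mathcal{R}_\mathfrak{p}$. Since $q\textup{M}_n(\mathcal{R})q\cong\textup{End}(P)$ and $P$ is finitely presented, localization is exact and commutes with $\textup{Hom}$, so it holds. A more elementary alternative is also available. Rank–1 gives $qe_{ij}q=\rho_q(e_{ij})q$ for the matrix units $e_{ij}$, so $q_{ki}q_{jl}=\rho_q(e_{ij})q_{kl}$. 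Summing $i=j$ and using $q^2=q$ gives $q=\big(\sum_i\rho_q(e_{ii})\big)q$. Comparing with a direct computation of $\textup{Tr}(q)q$ via the $2\times2$ minor relations would give $\textup{Tr}(q)q=q$, and injectivity of $r\mapsto rq$ then yields $\textup{Tr}(q)=1$. I would keep this as a fallback if the localization argument needs justification.
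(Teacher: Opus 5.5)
Your proof is correct and is essentially the paper's: trace both sides of $qxq=\rho_q(x)q$ to get $\textup{Tr}(qx)=\rho_q(x)\textup{Tr}(q)$, then get $\textup{Tr}(q)=1$ by localizing at primes. The paper cites \cref{1equiv} and the localization in the proof of \cref{repre}, while you derive local rank $1$ directly from $\mathcal{R}_{\mathfrak{p}}\cong\textup{End}(P_{\mathfrak{p}})\cong\textup{M}_k(\mathcal{R}_{\mathfrak{p}})$, which also cleanly excludes $k=0$.

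Drop the ``elementary fallback'', though. Its summed identity is just $q=\rho_q(1)q$. The $2\times2$ minor relations are also not yet available at that point: the corner condition only gives $q_{ki}q_{jl}=\rho_q(e_{ij})q_{kl}$, and turning this into vanishing minors needs $\rho_q(e_{ij})=q_{ji}$, which is the statement being proved.
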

\begin{proof}
This follows from the fact that $\rho_q(x)q=qxq\implies \rho_q(x)\textup{Tr}(q)=\textup{Tr}(qx),$ together with the fact that $\textup{Tr}(q)=1,$ as we will see in \cref{repre}.
\end{proof}
\begin{exmp}
We can identify $\mathbb{R}\mathds{1}^1$ with $\textup{T}^*\mathbb{R}\mathbb{P}^1\cong S^1\times\mathbb{R}.$ Then  
\begin{equation}
    \Omega=\frac{1}{2}d\theta\wedge dt
\end{equation}
and the adjoint map is
\begin{equation}
    (e^{i\theta},t)^*=\Big(e^{i\theta}\frac{1+it}{1-it},-t\Big)\;.
\end{equation}
Equivalently, 
\begin{equation}
    (\theta,t)^*=(\theta+2\arctan{t},-t)\;.
\end{equation}
\end{exmp}
\begin{lemma}\label{1equiv}
Let $q\in\textup{M}_n(\mathcal{R})$ be an idempotent. Then $q\in\mathcal{R}\mathds{1}^{n-1}$ if and only if $\textup{im}(q)$ is rank–1 as a projective module.
\end{lemma}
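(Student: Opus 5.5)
The claim is that for an idempotent $q\in\textup{M}_n(\mathcal{R})$, the map $Z(\textup{M}_n(\mathcal{R}))=\mathcal{R}\to q\textup{M}_n(\mathcal{R})q$, $r\mapsto rq$, is a bijection if and only if $P:=\textup{im}(q)$ is a rank–1 projective module. Write $\mathcal{R}^n\cong P\oplus Q$ with $Q=\ker(q)$. Since $q$ is the projection onto $P$ along $Q$, the corner algebra is identified with endomorphisms of $P$:
\begin{equation}
q\textup{M}_n(\mathcal{R})q\cong \textup{End}_{\mathcal{R}}(P)\,,\qquad qxq\mapsto (qxq)\vert_P\,,
\end{equation}
and under this identification $rq\mapsto r\cdot\textup{id}_P$. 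The statement therefore becomes: for a finitely generated projective module $P$, the structure map $\mathcal{R}\to\textup{End}_{\mathcal{R}}(P)$ is bijective if and only if $P$ has constant rank $1$. I will prove this by localizing at primes.

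Both conditions are local, so it suffices to work over a local ring. Bijectivity of $\mathcal{R}\to\textup{End}(P)$ can be checked after localizing at every prime $\mathfrak{p}$. Because $P$ is finitely presented, $\textup{End}(P)_{\mathfrak{p}}\cong\textup{End}(P_{\mathfrak{p}})$. Also, $P_{\mathfrak{p}}$ is free of some rank $k_{\mathfrak{p}}$, so $\textup{End}(P_{\mathfrak{p}})\cong\textup{M}_{k_{\mathfrak{p}}}(\mathcal{R}_{\mathfrak{p}})$, and the structure map is the scalar embedding $\mathcal{R}_{\mathfrak{p}}\to \textup{M}_k(\mathcal{R}_{\mathfrak{p}})$.

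If $k=1$, this embedding is an isomorphism. If $k=0$, it is not injective, since $1\mapsto 0$ and $\mathcal{R}_{\mathfrak{p}}\neq0$. If $k\ge2$, it is not surjective, because the elementary matrix $E_{12}$ is not scalar. Hence the map is bijective at $\mathfrak{p}$ exactly when $k_{\mathfrak{p}}=1$, and it is bijective globally exactly when $P$ has rank $1$ at every prime, which is the definition of a rank–1 projective module.

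The step needing the most care is the identification of the corner algebra with $\textup{End}(P)$ together with the localization compatibility. For the corner algebra, one has to check that the map $qxq\mapsto (qxq)\vert_P$ is injective (because $qxq$ vanishes on $Q$) and surjective (extend an endomorphism of $P$ by $0$ on $Q$). For localization, the base change $\textup{End}(P)\otimes\mathcal{R}_{\mathfrak{p}}\cong\textup{End}(P_{\mathfrak{p}})$ holds because $P$ is a direct summand of $\mathcal{R}^n$, so it is finitely presented. The ring $\mathcal{R}=0$ is a trivial edge case that is consistent with both sides.

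As a byproduct, this argument gives $\textup{Tr}(q)=1$, which the paper uses later. Locally $q$ is conjugate to $\textup{diag}(1,0,\dots,0)$, so $\textup{Tr}(q)$ equals $1$ in every localization and hence $\textup{Tr}(q)=1$.
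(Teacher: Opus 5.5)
Your proof is correct and follows the paper's route. Like the paper, you identify the corner algebra $q\textup{M}_n(\mathcal{R})q$ with $\textup{End}_{\mathcal{R}}(\textup{im}(q))$ (the paper does this via $T\mapsto Tq$ and $T\mapsto qTq$), and then use that a finitely generated projective $P$ has rank $1$ exactly when $\mathcal{R}\to\textup{End}_{\mathcal{R}}(P)$ is bijective. The only difference is that the paper cites this last fact through invertible modules, whereas you prove it directly by localizing and examining the scalar embedding $\mathcal{R}_{\mathfrak{p}}\to\textup{M}_{k_{\mathfrak{p}}}(\mathcal{R}_{\mathfrak{p}})$; this also makes your injectivity argument cleaner than the paper's appeal to $q\neq 0$.
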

\begin{proof}
This follows from the fact that a projective $\mathcal{R}$–module $M$ is rank–1 if and only if it is invertible, which is true if and only if $\textup{End}_{\mathcal{R}}(M)=\mathcal{R}.$  
\\\\To see how the result follows from this fact, we first note that $Z(\textup{M}_n(\mathcal{R}))\cong \mathcal{R}.$ Now, suppose $q\in\mathcal{R}\mathds{1}^{n-1}.$ Let $T\in \textup{End}_{\mathcal{R}}(\textup{im}(q))$ and extend it to $\widetilde{T}\in\textup{End}_{\mathcal{R}}(\mathcal{R}^n)$ by defining
\begin{equation}
  \widetilde{T}= Tq  
\end{equation}
For $v\in\textup{im}(q),$ we have that $\rho_q(\widetilde{T})v=q \widetilde{T}qv=Tv,$ hence $T=\rho_q(\widetilde{T})\in\mathcal{R}.$ Since $q\ne 0,$ this completes one direction.
\\\\Conversely, suppose that $\textup{End}_{\mathcal{R}}(\textup{im}(q))=\mathcal{R}$ and let $T\in\textup{End}_{\mathcal{R}}(\mathcal{R}^n).$ Then $qTq\in \textup{End}_{\mathcal{R}}(\textup{im}(q)),$ and therefore there exists some $r\in\mathcal{R}$ such that $qTq=rq.$ This completes the proof. 
\end{proof}
In the following, $\textup{CRing}$ is the category of commutative rings.
\begin{theorem}\label{repre}
The functor $\textup{CRing}\to\textup{Set},\,\mathcal{R}\to\mathcal{R}\mathds{1}^{n-1}$ is representable by
\begin{equation}
    \mathbb{Z}[\{x_j^i\}_{i,j=1}^n]/I\,,
\end{equation}
where $I$ is the ideal generated by 
\begin{equation}\label{term}
\sum_{i=1}^n x^j_ix^i_k-x^j_k\,,\;\;\sum_{i=1}^n x^i_i-1\,,\;\; x^j_kx^l_m-x^j_lx^k_m\,,\;\;\textup{for all}\;1\le j,k,l,m\le n\,.
\end{equation}
\end{theorem}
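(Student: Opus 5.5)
By definition, a natural isomorphism $\textup{Hom}_{\textup{CRing}}(\mathbb{Z}[\{x^i_j\}]/I,\mathcal{R})\cong\mathcal{R}\mathds{1}^{n-1}$ is the same as saying: a matrix $q=(q^i_j)\in\textup{M}_n(\mathcal{R})$ lies in $\mathcal{R}\mathds{1}^{n-1}$ if and only if its entries satisfy the relations \ref{term}. The correspondence $\varphi\mapsto(\varphi(x^i_j))$ is then automatically natural in $\mathcal{R}$. In particular, this shows that $\mathcal{R}\mapsto\mathcal{R}\mathds{1}^{n-1}$ is a functor at all, i.e. that rank–1 projections are stable under base change. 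So I would prove the pointwise equivalence for a fixed commutative ring $\mathcal{R}$.

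I read the quadratic relations as the $2\times2$ minors $q^j_kq^l_m-q^j_mq^l_k$. The printed index pattern seems to be a transcription of this, and I would check it against the example $q=vw^{T}$. The first family of generators is exactly $q^2=q$, the second is $\textup{Tr}\,q=1$, and the third says $\wedge^2 q=0$. By \cref{1equiv} it suffices to show, for an idempotent $q$:
\begin{equation}
\textup{im}(q)\ \textup{is projective of rank }1 \iff \textup{Tr}\,q=1\ \textup{and all }2\times 2\ \textup{minors of } q\ \textup{vanish}.
\end{equation}
Both sides are local conditions. The right side consists of equalities in $\mathcal{R}$, which can be checked after localizing at every prime $\mathfrak{p}$. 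The left side is by definition the statement that $\textup{im}(q)_\mathfrak{p}$ is free of rank $1$ for all $\mathfrak{p}$. Since $\mathcal{R}^n=\textup{im}(q)\oplus\ker(q)$ and all constructions commute with localization, I would reduce to $\mathcal{R}$ local.

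($\Rightarrow$) Suppose $\mathcal{R}$ is local and $\textup{im}(q)=\mathcal{R}v$ is free of rank $1$. Then $q=v\,w^{T}$, where $w^{T}$ is the functional $\lambda$ with $q(y)=\lambda(y)v$. Idempotence gives $(w^{T}v)v=v$, and since $v$ is a basis vector, $w^{T}v=1$. Hence $\textup{Tr}\,q=w^Tv=1$, and the minors $v_jw_kv_lw_m-v_jw_mv_lw_k$ vanish identically.

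($\Leftarrow$) Suppose $\mathcal{R}$ is local with residue field $k$. Over a local ring the projective module $\textup{im}(q)$ is free, say of rank $r$. Then $r=\dim_k\textup{im}(\bar q)=\textup{rank}(\bar q)$ for the reduction $\bar q\in\textup{M}_n(k)$, because $\textup{im}(\bar q)=\textup{im}(q)\otimes k$ by the direct sum decomposition. The vanishing of all $2\times2$ minors forces $\textup{rank}(\bar q)\le1$. The relation $\textup{Tr}\,\bar q=1\neq0$ forces $\bar q\ne0$. Hence $r=1$.

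The main subtlety is this last step. One cannot use ``trace of an idempotent equals its rank'', since that only holds modulo the characteristic. This is exactly why both the minor relations and the trace relation are needed: the minors give the upper bound on the rank, and the trace only rules out $\bar q=0$. A second point to verify carefully is that $\textup{im}(q)\otimes k=\textup{im}(\bar q)$, so that the rank of the free module is detected by the residue field. This follows from $\textup{im}(q)$ being a direct summand.
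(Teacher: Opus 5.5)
Your argument is correct and essentially follows the paper's: you reduce via Lemma~\ref{1equiv} to a statement about $\textup{im}(q)$, localize at each prime, use freeness over local rings for the forward direction, and for the converse bound the residue-field rank by the vanishing minors while the trace condition rules out $\bar q=0$. The only difference is that you factor $q=vw^{T}$ where the paper diagonalizes $q$ in a basis adapted to $\textup{im}(q)\oplus\ker(q)$; your reading of the quadratic generators as the genuine minors $x^j_kx^l_m-x^j_mx^l_k$ is also right, since the printed pattern $x^j_kx^l_m-x^j_lx^k_m$ already fails for the rank-1 idempotent $\begin{pmatrix}1&1\\0&0\end{pmatrix}$ (take $j=k=m=1$, $l=2$).
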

Setting each term of \cref{term} to zero, the first condition is the statement that a matrix is an idempotent, the second condition is the statement that its trace is $1$ and the third condition is the statement that all of its $2\!\times\!2$ minors vanish. The second and third statement together imply that it is rank–1.
\begin{proof}
We assume \cref{1equiv}. This result  then follows from localization and the fact that it's true when $\mathcal{R}$ is a local ring:
\\\\Suppose that $\mathcal{R}$ is a commutative ring and that $x\in \textup{M}_n(\mathcal{R})$ is an idempotent such that $\textup{im}(x)$ is rank–1 as a projective module. Let $\mathfrak{p}$ be a prime ideal. Then $x/1\in\mathcal{R}_{\mathfrak{p}}$ is an idempotent whose image is free of rank–1, since projective modules over local rings are free. Therefore, there exists a basis $v$ for $\textup{im}(x/1)$ and all of the column vectors of $x/1$ are in $\mathcal{R}_{\frak{p}}v,$ which implies that all $2\!\times\!2 $ minors vanish. To see that $\textup{Tr}(x)=1,$ choose bases for $\textup{ker}(x),\,\textup{im}(x).$ These determine a basis for $\mathcal{R}^n_{\frak{p}},$ and in this basis $x/1$ is diagonal with one entry equaling $1$ and all other entries equaling zero. Hence, $x/1$ has trace equal to $1$ and all of its $2\!\times\! 2$ minors vanish, and since $\frak{p}$ is an arbitrary prime ideal, this proves one direction.
\\\\For the other direction, let $\frak{p}$ be a prime ideal. The dimension of $\textup{im}(x)$ at $\frak{p}$ equals the dimension of $\textup{im}(x_\frak{p})$ over the residue field, $\kappa(\frak{p}).$ Since all $2\!\times\!2$ minors of $x_\frak{p}$ vanish, its rank is at most $1.$ Therefore, since $\textup{Tr}(x_\frak{p})\ne 0,$ its rank must be exactly one. Since $\frak{p}$ was arbitrary, it follows that $\textup{im}(x)$ is rank–1. This completes the proof.
\end{proof}
Due to the previous result, we make the following definition:
\begin{definition}
The (affine) scheme of rank–1 projections is 
\begin{equation}
\textup{Spec}(\mathbb{Z}[\{x_j^i\}_{i,j=1}^n]/I)\,.
\end{equation} 
\end{definition}
A choice of an involution on $\mathcal{R}$ determines an involution on the $\mathcal{R}$–points of this scheme, ie. rank–1 projections in $\textup{M}_n(\mathcal{R}).$ Therefore, we obtain a functor from commutative rings with involutions to sets with involutions. In this context, \cref{mainnn} says the following:
\begin{corollary}\label{mainmatrix}
Let $(\mathcal{R},*)$ be a commutative ring with an involution and for all $a \in \textup{T}_q\mathcal{R}\mathds{1}^{n-1},$ let 
    \begin{equation}
\mathcal{L}_q(a)=2\textup{Tr}(q^{\dagger} q)[a^{\dagger},q]+\textup{Tr}(a^\dagger q)[q,q^\dagger]\,.
\end{equation} 
Then $ \mathcal{L}_q(a)\in  \textup{T}_q\mathcal{R}\mathds{1}^{n-1}$ and 
\begin{equation}
\mathcal{L}_q^2(a)=-4\textup{Tr}(q^\dagger q)^3a\,.
\end{equation}  
\end{corollary}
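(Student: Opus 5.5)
The plan is to obtain \cref{mainmatrix} as the specialization of \cref{mainnn} to the $^*$–ring $(\textup{M}_n(\mathcal{R}),\dagger)$, with $a^\dagger$ meaning the conjugate transpose taken with respect to $*$ on entries.

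\textbf{Step 1: the center.} First note that $Z(\textup{M}_n(\mathcal{R}))=\mathcal{R}\cdot 1$. The involution $\dagger$ restricts to $*$ on this center, so the identification $Z\cong\mathcal{R}$ is compatible with the involutions.

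\textbf{Step 2: the hypothesis of \cref{mainnn}.} Next, check that every $q\in\mathcal{R}\mathds{1}^{n-1}$ satisfies $qq^\dagger q\in Z(\mathcal{A})q$. This holds because $q$ is rank–1 in the sense of the definition of $\mathds{1}(\mathcal{A})$, which is the definition used for $\mathcal{R}\mathds{1}^{n-1}$. Hence $qq^\dagger q=\rho_q(q^\dagger)q$. By the proposition identifying $\rho_q(x)=\textup{Tr}(qx)$, which uses $\textup{Tr}(q)=1$ from \cref{repre}, we get
\[
r=\rho_q(q^\dagger)=\textup{Tr}(qq^\dagger)=\textup{Tr}(q^\dagger q).
\]

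\textbf{Step 3: matching the formulas.} Then rewrite $\mathcal{L}_q(a)=2[a^\dagger,qq^\dagger q]+[qa^\dagger q,q^\dagger]$ in the form stated in the corollary.
\begin{itemize}
\item The first term is $2r[a^\dagger,q]=2\textup{Tr}(q^\dagger q)[a^\dagger,q]$.
\item For the second term, $qa^\dagger q=\rho_q(a^\dagger)q=\textup{Tr}(qa^\dagger)q$, so it equals $\textup{Tr}(a^\dagger q)[q,q^\dagger]$ by cyclicity of the trace.
\end{itemize}
This reproduces exactly the formula in \cref{mainmatrix}, as already noted in the rewriting $\mathcal{L}_q(a)=2\rho_q(q^*)[a^*,q]+\rho_q(a^*)[q,q^*]$.

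\textbf{Step 4: conclusion.} \Cref{mainnn} now gives two facts.
\begin{itemize}
\item $q+\varepsilon\mathcal{L}_q(a)$ is idempotent, so $\mathcal{L}_q(a)\in\textup{T}_q\mathbb{I}(\textup{M}_n(\mathcal{R}))$. By the proposition that tangent vectors to $\mathbb{I}$ at rank–1 points are tangent to $\mathds{1}$, this lies in $\textup{T}_q\mathcal{R}\mathds{1}^{n-1}$.
\item $\mathcal{L}_q^2(a)=-4r^3a=-4\textup{Tr}(q^\dagger q)^3a$.
\end{itemize}

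\textbf{Main obstacle.} The substantive content is \cref{mainnn} itself, whose proof is deferred. If that proof needs to be supplied here, I would verify directly that $q\mathcal{L}_q(a)+\mathcal{L}_q(a)q=\mathcal{L}_q(a)$ using $qaq=0$ and $a=qa+aq$. I would then expand $\mathcal{L}_q(\mathcal{L}_q(a))$ using:
\begin{itemize}
\item $(\mathcal{L}_q(a))^\dagger=2r^*[q^\dagger,a]+\rho_q(a^\dagger)^*[q,q^\dagger]$;
\item the relations $qq^\dagger q=rq$ and $q^\dagger qq^\dagger=r^*q^\dagger$;
\item $\rho_q(a)=0$.
\end{itemize}
This reduces all terms to multiples of $a$.

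The delicate bookkeeping is tracking $r$ versus $r^*$. Here $r=\textup{Tr}(q^\dagger q)$ is self-adjoint, since $\textup{Tr}(q^\dagger q)^*=\textup{Tr}((q^\dagger q)^\dagger)$, so that issue disappears in the matrix case. The remaining care is in cancelling the $[q,q^\dagger]$ terms via $\rho_q\big([q^\dagger,a]^\dagger\big)$ computations.
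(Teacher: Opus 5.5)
Your proposal is correct and matches the paper's proof. The paper also derives the corollary by specializing \cref{mainnn} to $(\textup{M}_n(\mathcal{R}),\dagger)$ and using $qxq=\textup{Tr}(qx)q$ to identify $r=\textup{Tr}(q^\dagger q)$ and $\rho_q(a^\dagger)=\textup{Tr}(a^\dagger q)$; your appeal to the proposition that tangent vectors to $\mathbb{I}$ at a rank--1 point are tangent to $\mathds{1}$ simply makes explicit a step the paper leaves implicit.
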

\begin{proof}
This follows from \cref{mainnn} and the fact that for any $q\in\mathcal{R}\mathds{1}^{n-1}$ and $x\in M_n(\mathcal{R}),$ $qxq=\textup{Tr}(qx)q.$
\end{proof}
\begin{remark}
Consider $\mathbb{C}\mathds{1}^{n-1}.$ We have already seen the following: $I,i$ anticommute; are integrable; the real and imaginary parts of $\Omega$ are closed; $g(\cdot,\cdot):=\textup{Re}(\Omega)(\cdot,I(\cdot))$ is a Riemannian metric; $g(\cdot,iI\cdot)=\textup{Im}(\Omega).$
To see that $\mathbb{C}\mathds{1}^{n-1}$ is hyperk\"{a}hler, it is enough to check that $g(\cdot,i\cdot)$ is closed. To see the latter, it is enough to find a K\"{a}hler potential for $g,$ with respect to $i.$ Such a K\"{a}hler potential is given by $\sqrt{\textup{Tr}(q^\dagger q)}\,.$ In \cref{isomem}, we will also show that this metric is complete. Since $\mathbb{C}\mathds{1}^{n-1}$ is diffeomorphic to $\textup{T}^*\mathbb{CP}^{n-1},$ it follows by uniqueness (\ref{dan}, \ref{kal}) that $g$ is isometric to Calabi's hyperk\"{a}hler metric. 
\end{remark}
\section{Proof of the Main Identity}\label{mainproof}
Before carrying on with the proof of \cref{mainnn}, we have the following proposition:
\begin{proposition}\label{21}
If $q\in\mathbb{I}(\mathcal{A})$ is idempotent and $qq^*q=rq$ for some $r\in Z(\mathcal{A}),$ then $rqq^*=r^*qq^*$ and $r^2q=rr^*q.$
\end{proposition}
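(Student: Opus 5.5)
Let $q$ be idempotent with $qq^*q=rq$ and $r\in Z(\mathcal{A})$. The plan is to apply the involution to the defining relation, and then compare two different ways of computing the same triple product. Throughout we use only idempotence, $q^2=q$ and $(q^*)^2=q^*$, together with centrality of $r$ and $r^*$.

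First take adjoints of $qq^*q=rq$. This gives $q^*qq^*=q^*r^*=r^*q^*$, since $r^*$ is central: the involution preserves the center, because $(zx)^*=x^*z^*$ for all $x$ means $z^*$ commutes with every $x^*$.

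Next evaluate $qq^*qq^*$ in two ways.
\begin{itemize}
\item Group it as $(qq^*q)q^*$. Then $qq^*qq^*=rqq^*$.
\item Group it as $q(q^*qq^*)$. Then $qq^*qq^*=q\,r^*q^*=r^*qq^*$.
\end{itemize}
Comparing the two expressions yields the first claim, $rqq^*=r^*qq^*$.

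For the second claim, multiply the first identity on the right by $q$:
\[
r\,qq^*q=r^*\,qq^*q .
\]
Substituting $qq^*q=rq$ on both sides gives $r^2q=r^*rq=rr^*q$, using commutativity of central elements.

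No step is a real obstacle. The only points needing care are that the involution reverses products, and that central elements may be moved freely, including $r^*$.
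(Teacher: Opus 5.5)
Your proof is correct and takes the same approach as the paper. The paper evaluates $qq^*qq^*=rqq^*$ and uses that this product is self-adjoint to get $rqq^*=r^*qq^*$, which is equivalent to your regrouping $q(q^*qq^*)=r^*qq^*$ via the adjoint relation; it then right-multiplies by $q$ exactly as you do.
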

\begin{proof}
Computing $qq^*qq^*$ and using that it is self–adjoint gives
\begin{equation}
rqq^*=r^*qq^*\,.
\end{equation}
Multiplying both sides of this equation by $q$ on the right gives 
\begin{equation}
  r^2q=rr^*q\,.  
\end{equation}
\end{proof}
For matrix $^*$–rings, $r=\textup{Tr}(q^\dagger q)$ and it is therefore self–adjoint.  
\begin{proof}{of \cref{mainnn}:}
That $\mathcal{L}_q(a)\in\textup{T}_q\mathbb{I}(\mathcal{A})$ follows from \cref{vectoridentity}. That $\mathcal{L}_q(sa)=s^*\mathcal{L}_q(a)$ follows from the equation $(sa)^*=s^*a^*.$ For the final part, write
 \begin{equation}
\mathcal{L}_q(a)=2r[a^*,q]+[qa^*q,q^*]\,.
\end{equation}
We compute
 \begin{align}
    \mathcal{L}_q^2(a)&= 4r^2[[q^*,a],q] 
    \\&+\color{darkred}{2r[[q,q^*aq^*],q]}
    \\&+\color{darkblue}{2r^\#[q[q^*,a]q,q^*]}
    \\&\label{lastt}+\color{darkgreen}\cancelslash{red}{[q[q,q^*aq^*]q,q^*]}\,.
 \end{align}
\Cref{lastt} is zero since
$q\textup{T}_q\mathbb{I}(\mathcal{A}))q=0.$ Expanding the commutators, writing $a=qa+aq$ and using \cref{21}, we get
 \begin{align}
    \mathcal{L}_q^2(a)&\color{black}{=4r^2(\cancelslash{Magenta}{q^*aq}-\cancelslash{Slash5}{qaq^*q}-aqq^*q)}
    \\&\color{black}{-4r^2(qq^*qa+\cancelslash{orange}{qq^*aq}-\cancelslash{cyan}{qaq^*})}
    \\&\color{darkred}{+2r^2(\cancelslash{Slash5}{qaq^*q}-\cancelslash{cyan}{qaq^*)}}
    \\&\color{darkred}{+\cancelslash{orange}{2r^2qq^*aq}-\cancelslash{Slash9}{2rqq^*aqq^*}}
    \\&\color{darkred}{+2r^2(\cancelslash{orange}{qq^*aq}-\cancelslash{Magenta}{q^*aq})}
     \\&\color{darkred}{-\cancelslash{black}{2rq^*qaq^*q}+\cancelslash{Slash5}{2r^2qaq^*q}}
    \\&\color{darkblue}{+\cancelslash{Slash9}{2rqq^*aqq^*}-\cancelslash{cyan}{2r^2qaq^*}}
     \\&\color{darkblue}{-2r^2\cancelslash{Magenta}{q^*aq}+\cancelslash{black}{2rq^*qaq^*q}}
     \\&=-4r^3(aq+qa)
     \\&=-4r^3a\,.
 \end{align}
\end{proof}
\section{Isometric Embeddings}\label{isomem}
Let $\mathcal{V}$ be a finite–dimensional Hilbert space and let $\mathcal{L}(\mathcal{V})$ denote the $C^*$–algebra of linear operators on $\mathcal{V}.$ The Riemannian metric of $\textup{T}^*\mathbb{P}\mathcal{V}\cong \mathds{1}(\mathcal{L}(\mathcal{V}))$\footnote{This is the real part of the metric in \cref{metricht}.} in is pulled back from a Riemannian metric on $\mathcal{L}(\mathcal{V})-\{0\}.$ Therefore, in order to isometrically embed $\textup{T}^*\mathbb{P}\mathcal{V}$ into Euclidean space, it is enough to isometrically embed $\mathcal{L}(\mathcal{V})$ into Euclidean space. The aforementioned Riemannian metric on  $\mathcal{L}(\mathcal{V})-\{0\}$ is of the following form, with respect to the Hilbert–Schmidt inner product:
\begin{definition}
Let $\mathcal{H}$ be a real, complex or quaternionic Hilbert space. We define a Riemannian metric $g$ on $\mathcal{H}-\{0\}$ by 
\begin{equation*}
g_x(v,w):=\textup{Re}\bigg(\frac{2}{\|x\|}\langle v,w\rangle-\frac{1}{\|x\|^3}\langle v,x\rangle\langle x,w\rangle\bigg)\,.
\end{equation*}
\end{definition}
That $g$ is positive definite follows from the Cauchy–Schwartz inequality:
\begin{equation}
    \|v\|^2\|x\|^2\ge |\langle v,x\rangle|^2\implies \frac{\|v\|^2}{\|x\|}\ge \frac{|\langle v,x\rangle|^2}{\|x\|^3}
\end{equation}
and this implies that $g_x(v,v)>0$ for $v\ne 0.$ 
\begin{remark}
In the case of a complex Hilbert space, this metric is K\"{a}hler and the norm
\begin{equation}
\mathcal{H}-\{0\}\to\mathbb{R}\,,\;\; x\mapsto\|x\|    
\end{equation}
is a K\"{a}hler potential for $g.$
\end{remark}
Before stating the isometric embedding result, we make two comments:
\begin{enumerate}
    \item Given any finite dimensional Hilbert space $\mathcal{H},$ we let $g_\mathcal{H}$ denote the canonical translation invariant Riemannian metric on $\mathcal{H},$ ie. for $v,w\in \textup{T}_x\mathcal{H},$
    \begin{equation}
        g_{\mathcal{H}}(v,w)=\textup{Re}\langle v,w\rangle\,.
    \end{equation} 
    \item For $x\in\mathcal{H},$ let $x^*\in\mathcal{H}^*$ be defined by $x^*(y)=\langle x,y\rangle.$ Then for any $x,y\in\mathcal{H},\,xy^*\in\mathcal{L}(\mathcal{H})$ is defined by $z\mapsto \langle y,z\rangle x$ and the inner product is given by
    \begin{equation}
        \langle xy^*,wz^*\rangle_{\mathcal{L}(\mathcal{H})}=\langle x,w\rangle\langle z,y\rangle\,.
    \end{equation}
\end{enumerate}
\begin{lemma}
Let $\mathcal{H}$ be a real, complex or quaternionic Hilbert space\footnote{In the case of a real Hilbert space, another isometric embedding is given by $\mathcal{H}-\{0\}\to\mathbb{R}\oplus\mathcal{H},$ $x\mapsto \|x\|^{1/2}(1,\|x\|^{-1}x),$  where the metric on the codomain is $2g_{\mathbb{R}}\oplus 2g_{\mathcal{H}}.$} and let $\mathbb{R}\oplus\mathcal{H}\oplus\mathcal{L}(\mathcal{H})_{\textup{self–adjoint}}$ be equipped with the Riemannian metric given by 
\begin{equation}
    \frac{5}{2}g_{\mathbb{R}}\oplus g_{\mathcal{H}}\oplus\frac{1}{2}g_{\mathcal{L}(\mathcal{H})}\,.
\end{equation}
Then  
\begin{equation}
(\mathcal{H}-\{0\},g)\to \mathbb{R}\oplus\mathcal{H}\oplus\mathcal{L}(\mathcal{H})_{\textup{self–adjoint}}\,,\;\;x\mapsto \|x\|^{1/2}(1,\|x\|^{-1}x,\|x\|^{-2}xx^*)\,,
\end{equation}
is an isometric embedding.\footnote{This map is equivariant with respect to linear isometries of $\mathcal{H}$ and is positive homogeneous of degree $1/2.$}
\end{lemma}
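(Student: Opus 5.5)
The plan is to verify directly that the pullback of $\frac{5}{2}g_{\mathbb{R}}\oplus g_{\mathcal{H}}\oplus\frac{1}{2}g_{\mathcal{L}(\mathcal{H})}$ under
\[
F(x)=\bigl(n^{1/2},\,n^{-1/2}x,\,n^{-3/2}xx^*\bigr),\qquad n=\|x\|,
\]
equals $g$, and then to check that $F$ is a topological embedding.

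\textbf{Differential and the three contributions.} Fix $x\ne 0$ and a tangent vector $v$. Set
\[
s:=dn(v)=\textup{Re}\langle v,x\rangle/n .
\]
The differential of $F$ is then
\[
dF_x(v)=\Bigl(\tfrac{s}{2n^{1/2}},\;n^{-1/2}v-\tfrac12 n^{-3/2}s\,x,\;n^{-3/2}(vx^*+xv^*)-\tfrac32 n^{-5/2}s\,xx^*\Bigr).
\]
I will compute the three squared norms separately.
\begin{enumerate}
\item The $\mathbb{R}$-component contributes $\tfrac{5}{2}\cdot\tfrac{s^2}{4n}=\tfrac{5s^2}{8n}$.
\item For the $\mathcal{H}$-component, use $\textup{Re}\langle v,x\rangle=ns$. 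The squared norm is
\[
\frac{\|v\|^2}{n}-\frac{3s^2}{4n}.
\]
\item For the operator component, use the stated formula $\langle xy^*,wz^*\rangle=\langle x,w\rangle\langle z,y\rangle$. It gives
\begin{align*}
\|vx^*+xv^*\|^2&=2n^2\|v\|^2+2\,\textup{Re}\bigl(\langle v,x\rangle\langle v,x\rangle\bigr),\\
\textup{Re}\langle vx^*+xv^*,xx^*\rangle&=2n^3s,\\
\|xx^*\|^2&=n^4 .
\end{align*}
After multiplying by $\tfrac12$, the operator component contributes
\[
\frac{\|v\|^2}{n}+\frac{\textup{Re}(\langle v,x\rangle^2)}{n^3}-\frac{15s^2}{8n}.
\]
\end{enumerate}

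\textbf{Summing.} Adding the three contributions, the $s^2$ coefficients combine as $\tfrac58-\tfrac34-\tfrac{15}{8}=-2$. The pullback is therefore
\[
\frac{2\|v\|^2}{n}+\frac{\textup{Re}(z^2)-2(\textup{Re}\,z)^2}{n^3},\qquad z=\langle v,x\rangle .
\]
The key identity is $\textup{Re}(z^2)-2(\textup{Re}\,z)^2=-|z|^2$. This holds in $\mathbb{R}$, $\mathbb{C}$ and $\mathbb{H}$: writing $z=a+u$ with $u$ purely imaginary, one has $u^2=-|u|^2$. The pullback is then exactly
\[
\frac{2}{n}\|v\|^2-\frac{1}{n^3}|\langle v,x\rangle|^2=g_x(v,v).
\]
By polarization, this shows $F^*(\text{Euclidean metric})=g$.

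\textbf{Embedding.} Since $g$ is positive definite (by the Cauchy--Schwarz remark above), $dF$ is injective, so $F$ is an immersion. $F$ is injective, and its inverse on the image is continuous, because $x$ is recovered as the product of the first coordinate and the second coordinate, $n^{1/2}\cdot n^{-1/2}x$. Hence $F$ is a homeomorphism onto its image, and so an isometric embedding.

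\textbf{Main obstacle.} I expect the main obstacle to be the bookkeeping in the operator term, especially in the quaternionic case. There I must make sure the inner-product convention makes $\langle vx^*,xv^*\rangle=\langle v,x\rangle\langle v,x\rangle$ rather than $|\langle v,x\rangle|^2$, since only the former yields the cancellation above. I would double-check this on a one-dimensional $\mathcal{H}$ first.
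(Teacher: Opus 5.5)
Your proposal is correct and follows essentially the paper's route: differentiate the three components of the map, pull back the weighted Euclidean metric, and collapse the $\textup{Re}(\langle v,x\rangle^2)$ and $(\textup{Re}\langle v,x\rangle)^2$ terms with a real-part identity. The differences are minor. You work with the quadratic form, using $\textup{Re}(z^2)-2(\textup{Re}\,z)^2=-|z|^2$ followed by polarization, where the paper works bilinearly with $\textup{Re}(\bar a b)+\textup{Re}(ab)=2\textup{Re}(a)\textup{Re}(b)$. You also check that the map is a homeomorphism onto its image by recovering $x$ as the product of the first two coordinates, which the paper leaves implicit.
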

\begin{proof}
The derivatives of $x\mapsto \|x\|^{1/2},\,x\mapsto \|x\|^{-1/2}x,\,x\mapsto \|x\|^{-3/2}xx^*$ are, respectively,
\begin{align}
    &v\mapsto \frac{1}{2}\|x\|^{-3/2}\textup{Re}\langle x,v\rangle\,,
    \\& v\mapsto \|x\|^{-1/2}v-\frac{1}{2}\|x\|^{-5/2}\textup{Re}\langle x,v\rangle x\,,
    \\&v\mapsto -\frac{3}{2}\|x\|^{-7/2}\textup{Re}\langle x,v\rangle xx^*+\|x\|^{-3/2}(vx^*+xv^*)\,.
\end{align}
It follows that the pullback metric is 
\begin{equation}
(v,w)\mapsto 2\|x\|^{-1}\textup{Re}\langle v,w\rangle-2\|x\|^{-3}\textup{Re}\langle x,v\rangle \textup{Re}\langle x,w\rangle  +\|x\|^{-3}\textup{Re}(\langle x,v\rangle\langle x,w\rangle)\,.
\end{equation}
Since $\textup{Re}(\overline{a}b)+\textup{Re}(ab)=2\textup{Re}(a)\textup{Re}(b),$ we can rewrite this as
\begin{equation}
(v,w)\mapsto 2\|x\|^{-1}\textup{Re}\langle v,w\rangle-\|x\|^{-3}\textup{Re}(\langle v,x\rangle\langle x,w\rangle)
\end{equation}
and this completes the proof.
\end{proof}
There is an action of $SU(n)\times\mathbb{Z}/2\mathbb{Z}$ on $\textup{T}^*\mathbb{CP}^{n-1}.$ Therefore, there is an action of $SU(n)\times\mathbb{Z}/2\mathbb{Z}\times \mathbb{Z}/2\mathbb{Z}$ on $\textup{T}^*\mathbb{CP}^{n-1}\times \{0,1\}.$ Due to the previous result:
\begin{corollary}
There is an $SU(n)\times\mathbb{Z}/2\mathbb{Z}\times \mathbb{Z}/2\mathbb{Z}$–equivariant isometric embedding
\begin{equation}\label{mainr}
    \textup{T}^*\mathbb{CP}^{n-1}\times\{0,1\}\xhookrightarrow{}\mathbb{R}^{(n^2+1)^2}
\end{equation}
whose image is an affine variety.
\end{corollary}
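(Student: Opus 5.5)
Compose three ingredients: the identification of $\textup{T}^*\mathbb{CP}^{n-1}$ with $\mathbb{C}\mathds{1}^{n-1}\subset\textup{M}_n(\mathbb{C})$, the fact that the hyperk\"{a}hler metric on $\mathbb{C}\mathds{1}^{n-1}$ is pulled back from the metric $g$ on $\textup{M}_n(\mathbb{C})-\{0\}$ with the Hilbert–Schmidt inner product, and the isometric embedding of $(\mathcal{H}-\{0\},g)$ from the previous lemma with $\mathcal{H}=\textup{M}_n(\mathbb{C})$. Since $\textup{M}_n(\mathbb{C})\cong\mathbb{C}^{n^2}$, we have $\mathcal{L}(\mathcal{H})_{\textup{self–adjoint}}=\textup{Herm}(n^2)$, of real dimension $n^4$. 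The target $\mathbb{R}\oplus\textup{M}_n(\mathbb{C})\oplus\textup{Herm}(n^2)$ therefore has real dimension $1+2n^2+n^4=(n^2+1)^2$. Rescaling coordinates by $\sqrt{5/2}$ and $\sqrt{1/2}$ makes the metric $\tfrac52 g_{\mathbb{R}}\oplus g_{\mathcal{H}}\oplus\tfrac12 g_{\mathcal{L}(\mathcal{H})}$ standard Euclidean, so composing the inclusion $\mathbb{C}\mathds{1}^{n-1}\hookrightarrow\textup{M}_n(\mathbb{C})-\{0\}$ with the map $x\mapsto\|x\|^{1/2}(1,\|x\|^{-1}x,\|x\|^{-2}xx^*)$ gives an isometric embedding of one copy of $\textup{T}^*\mathbb{CP}^{n-1}$. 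The embedding is injective since the middle component determines $x$ (via $\|x\|^{1/2}$ from the first). It is proper since $\mathbb{C}\mathds{1}^{n-1}$ is closed in $\textup{M}_n(\mathbb{C})$ and avoids $0$.

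For the factor $\{0,1\}$, use the sign of the first coordinate: send $(q,0)$ to $(x,\cdot,p)$ as above and $(q,1)$ to $(-x,\cdot,p)$, or equivalently allow both square roots. Concretely, the image is cut out by the polynomial system in the introduction with the condition $x\ge0$ dropped. The key algebraic step is to show that $(x,q',p)$ satisfies
\[
p(q')=xq',\; p^2=xp,\; xq'^2=q',\; x\textup{Tr}\,q'=1,\; \textup{Tr}\,p=x,\; \textup{Tr}(q'^\dagger q')=x^2
\]
if and only if $x\neq0$, $Q:=xq'$ is a rank–1 projection with $x^2=\textup{Tr}(Q^\dagger Q)^{1/2}$ (so $|x|=\|Q\|^{1/2}$), and $p/x$ is the Hermitian projection onto $\mathbb{C}q'$. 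Here $q'=\|Q\|^{-1/2}Q$ and $p=\|Q\|^{-3/2}QQ^*$, where $QQ^*$ is the rank–one operator $y\mapsto\langle Q,y\rangle Q$ on $\textup{M}_n(\mathbb{C})$. Each direction is a direct check. From $x\textup{Tr}\,q'=1$ we get $x\neq0$. The identity $xq'^2=q'$ says exactly that $Q^2=Q$, and $\textup{Tr}\,Q=1$. Then $p^2=xp$ with $\textup{Tr}\,p=x$ and $p(q')=xq'$ forces $p/x$ to be the idempotent with image $\mathbb{C}q'$ (given that $p$ is Hermitian). The rank–1 condition is automatic from $\textup{Tr}\,Q=1$ once we add the $2\times2$ minors of $Q$ from \cref{repre} to the equations. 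So the image is a real affine variety with two components interchanged by $x\mapsto -x$.

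For equivariance: $SU(n)$ acts by conjugation $Q\mapsto UQU^{-1}$, which preserves the Hilbert–Schmidt norm and so is a linear isometry of $\mathcal{H}$. The lemma's footnote says the embedding is equivariant for linear isometries. The first $\mathbb{Z}/2\mathbb{Z}$ acts by $Q\mapsto Q^\dagger$, which is an $\mathbb{R}$-linear isometry of $\textup{M}_n(\mathbb{C})$. It is antilinear, so I will verify directly that it acts compatibly on the $\textup{Herm}$ component, via $xx^*\mapsto \bar{x}\bar{x}^*$ conjugated appropriately. The second $\mathbb{Z}/2\mathbb{Z}$ acts by the sign flip of the first coordinate. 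I expect the main obstacle to be the dagger isometry: checking that it preserves $g$ (it does, because $g$ involves only $\textup{Re}$ of Hilbert–Schmidt products, which are preserved by adjoints), and that its induced action on $\textup{Herm}(n^2)$ is linear, so that the embedding is genuinely equivariant for an orthogonal action on $\mathbb{R}^{(n^2+1)^2}$.
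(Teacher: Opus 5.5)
Your overall route is the paper's: identify $\textup{T}^*\mathbb{CP}^{n-1}$ with $\mathbb{C}\mathds{1}^{n-1}$, use that its metric is pulled back from $g$ on $\textup{M}_n(\mathbb{C})-\{0\}$, apply the lemma with $\mathcal{H}=\textup{M}_n(\mathbb{C})$, and rescale. The dimension count, injectivity, properness and $SU(n)$-equivariance are fine. The $\dagger$ step you flagged is routine: for the antiunitary map $\sigma(Q)=Q^\dagger$ one has $\sigma(x)\sigma(x)^*=\sigma\circ xx^*\circ\sigma^{-1}$, and $T\mapsto\sigma T\sigma^{-1}$ is an $\mathbb{R}$-linear isometry of $\textup{Herm}(n^2)$.

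The real error is in the $\{0,1\}$ factor. You send $(q,1)$ to $(-x,\cdot,p)$, keeping $p$, and claim the two components of the zero set are exchanged by $x\mapsto -x$. The system is not invariant under flipping $x$ alone. At your second-copy point the equations read $(-x)\textup{Tr}\,q'=-1$, $\textup{Tr}\,p=x\neq -x$ and $(-x)q'^2=-q'$. So that copy is disjoint from the zero set, and the claim that the image is cut out by the system fails, and with it the ``affine variety'' part of the statement.

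Your own algebraic step exposes this. From $Q=xq'$ and $p/x$ equal to the Hermitian projection onto $\mathbb{C}q'$, you get $q'=Q/x$ and $p=q'q'^*/x=QQ^*/x^3$. Hence the solutions with $x<0$ are $(-\|Q\|^{1/2},-\|Q\|^{-1/2}Q,-\|Q\|^{-3/2}QQ^*)$, not points with $q'=\|Q\|^{-1/2}Q$ and $p=\|Q\|^{-3/2}QQ^*$ as you wrote.

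Your remark ``allow both square roots'' is the right idea, but carried through it negates all three components, not just the first. This is the paper's formula $(q,k)\mapsto(-1)^k\|q\|^{1/2}(1,\|q\|^{-1}q,\|q\|^{-2}q\langle q,\cdot\rangle)$: the second $\mathbb{Z}/2\mathbb{Z}$ acts by the antipodal map $v\mapsto -v$ on all of $\mathbb{R}^{(n^2+1)^2}$. This map is orthogonal and commutes with the linear $SU(n)$ and $\dagger$ actions, so isometry and equivariance are unaffected.

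In every equation of the system the two sides have degrees of the same parity. In $p(q')=xq'$, $p^2=xp$, $x\textup{Tr}\,q'=1$ and $\textup{Tr}(q'^\dagger q')=x^2$ both sides have even degree; in $xq'^2=q'$ and $\textup{Tr}\,p=x$ both sides have odd degree. So the zero set is invariant under $v\mapsto -v$, and your corrected analysis identifies it with exactly the $x>0$ copy and its negative. Incidentally, adding the $2\times 2$ minors is unnecessary: over $\mathbb{C}$ an idempotent of trace $1$ already has rank $1$.
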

 Explicitly, the map \ref{mainr} is given by
\begin{align}
&\nonumber \mathds{1}(\mathcal{L}(\mathcal{H}))\times\{0,1\}\to \mathbb{R}\oplus \mathcal{L}(\mathcal{H})\oplus \mathcal{L}(\mathcal{L}(\mathcal{H}))_{\textup{self–adjoint}}\,,
\\&
(q,k)\mapsto (-1)^k\|q\|^{1/2}(1,\|q\|^{-1}q,\|q\|^{-2}q\langle q,\cdot\rangle)\,,
\end{align}
where $\|q\|=\sqrt{\textup{Tr}(q^*q)},$ $q\langle q,\cdot\rangle(T):=q\langle q,T\rangle$ and the image of \ref{mainr} is given by 
\begin{equation}
     \{(x,q,p):  p(q)=xq,\,p^2=xp,\,xq^2=q,\,x\textup{Tr}\,q=1,\,\textup{Tr}\, p=x,\textup{Tr}(q^*q)=x^2\}\,.
\end{equation}
\section*{Acknowledgments}
Thank you to Omar Kidwai for your superb accommodations. Special thanks to Yuanyuan for all of your support.  
\section{References}
\begin{enumerate}
    \item\label{bou}Charles Boubel. The algebra of parallel endomorphisms of a pseudo-Riemannian metric: semi-simple part. Mathematical Proceedings of the Cambridge Philosophical Society 159 (2015): 219--237. DOI: 10.1017/S0305004115000304.
    \item\label{cal}Eugenio Calabi. M\'etriques k\"ahl\'eriennes et fibr\'es holomorphes. Annales scientifiques de l'\'Ecole Normale Sup\'erieure 12 (1979): 269--294. DOI: 10.24033/asens.1367.
    \item\label{dan}
Andrew Dancer and Andrew Swann. Hyperk\"ahler metrics associated to compact Lie groups. Mathematical Proceedings of the Cambridge Philosophical Society 120 (1996): 61--69.
\item\label{dun}
Maciej Dunajski and Paul Tod. Conformal and isometric embeddings of gravitational instantons. Geometry, Lie Theory and Applications 16 (2022): 21--48. DOI: 10.1007/978-3-030-81296-6\_2.
    \item\label{egu}
Tohru Eguchi and Andrew J. Hanson. Asymptotically flat self-dual solutions to Euclidean gravity. Physics Letters B 74 (1978): 249--251. DOI: 10.1016/0370-2693(78)90566-X.
\item\label{fed}
Boris V. Fedosov. A simple geometrical construction of deformation quantization. Journal of Differential Geometry 40 (1994): 213--238. DOI: 10.4310/JDG/1214455536.
\item\label{han}
Andrew J. Hanson and Ji-Ping Sha. Isometric Embedding of the A1 Gravitational Instanton. Memorial Volume for Kerson Huang (2017): 95--111. DOI: 10.1142/9789813207431\_0013.
\item\label{kal}
D. Kaledin. Hyperk\"ahler metrics on total spaces of cotangent bundles. In D. Kaledin and M. Verbitsky, eds., Hyperk\"ahler manifolds. Mathematical Physics 12 (1999). International Press, Cambridge, MA.
\item\label{leu}
CW. Leung and CK. Ng. Analytic bundle structure on the idempotent manifold. Monatsh Math
196, 103–133 (2021). https://doi.org/10.1007/s00605-021-01562-4
\item\label{maz} 
F. Mazzoli, A. Seppi and A. Tamburelli. Para‑hyper‑Kähler Geometry of the Deformation Space of Maximal Globally Hyperbolic Anti‑de Sitter Three‑Manifolds. Memoirs of the American Mathematical Society, Vol. 306, No. 1546, (2025). Providence, RI: American Mathematical Society.
\item\label{mum}
David Mumford. Lectures on Curves on an Algebraic Surface. Annals of Mathematics Studies, No. 59. Princeton, NJ: Princeton University Press, 1966.
\item\label{mur}
Gerard J. Murphy. C$^*$-Algebras and Operator Theory. Academic Press, 1990.
\end{enumerate}
\end{document}